\documentclass[12pt,reqno]{amsart}
\usepackage{amsmath,amssymb,amsthm}
\usepackage[margin=1in]{geometry}
\usepackage{hyperref}
\usepackage{cleveref}

\title[Some Central Digraphs]{\bfseries Some Central Digraphs and their Automorphisms}

\author{Jacob Siehler}
\address{Gustavus Adolphus College\\
800W College Avenue\\
St Peter, MN 56082}
\email{jsiehler@gustavus.edu}

\keywords{central digraph, central groupoid, directed graph, graph automorphism}

\subjclass[2020]{05C20, 05C25, 05C60}

\theoremstyle{plain}
\newtheorem{theorem}{Theorem}
\newtheorem{lemma}[theorem]{Lemma}
\newtheorem{corollary}[theorem]{Corollary}
\newtheorem{proposition}[theorem]{Proposition}

\theoremstyle{definition}
\newtheorem{definition}[theorem]{Definition}
\newtheorem{example}[theorem]{Example}

\theoremstyle{remark}

\def\isomorphic{\cong}
\def\Int{\mathbb{Z}}
\def\Zmod#1{\Int/#1\Int}
\def\Zk{\Zmod{k}}
\def\Zpp{\Zmod{p^2}}
\def\FieldF{\mathbb{F}}
\def\Zmodmult#1{(\Int/#1\Int)^{\times}}
\def\divides{\mid}
\def\gname{metacyclic central digraph}
\def\gnames{\gname\relax s}
\def\kname{size}
\def\cname{multiplicative order}
\def\wname{unit}
\DeclareMathOperator{\Aut}{Aut}
\DeclareMathOperator{\Iso}{Iso}
\DeclareMathOperator{\im}{im}
\DeclareMathOperator{\quo}{quo}
\DeclareMathOperator{\rem}{rem}
\DeclareMathOperator{\car}{car}
\DeclareMathOperator{\bor}{bor}
\DeclareMathOperator{\eulerphi}{\varphi}

\begin{document}

\begin{abstract}
We define a family of finite directed graphs $G(k,c,w)$ on vertex set $\Zk\times \Zk$, where $w\in\Zk^\times$ has multiplicative order $c$, which divides $k$.  We show that each $G(k,c,w)$ is a central digraph (its adjacency matrix squares to the all-ones matrix).  We deduce explicit descriptions of the graph automorphism groups for graphs of type $G(k,1,1)$; $G(k,2,-1)$ when $k$ is even; and $G(p^2, p, 1+p)$, when $p$ is prime. 
\end{abstract}
\maketitle


\section{Introduction and Main Results}

A \textit{central digraph} is a directed graph characterized by the property that the square of its adjacency matrix is a matrix of all ones.  Equivalently, for any vertices $x$ and $z$ in the graph (not necessarily distinct), there is a unique vertex $y$ such that $x\to y$ and $y\to z$ are edges in the graph.  We will call $y$ a \textit{link} from $x$ to $z$ in this case.

There is a one-to-one correspondence between central digraphs and the algebraic structures known as \textit{central groupoids}, detailed in Knuth~\cite{knuth1970notes}.  We will take the graph-theoretic point of view, although due to the correspondence, every result here can be interpreted as a result about central groupoids as well.  Interest in these structures seems to have originated with the problem, posed by Hoffman~\cite{hoffman1967}, of enumerating $0/1$ matrices that square to a matrix of ones.  

In this paper, we define a parametrized family of central digraphs, using elementary number theory.  We show that distinct parameters give rise to distinct (non-isomorphic) digraphs, and exhibit some values of the parameters for which it is possible to determine the automorphism group of the digraph.  

Before defining the graphs in question, let us lay out a few elementary properties of (finite) central digraphs in general, all of which are succinctly established both in \cite{knuth1970notes} and in \cite{curtis2004central}.  If $G$ is a central digraph, then
\begin{enumerate}
\item The number of vertices in $G$ must be a square; say $G$ has $k^2$ vertices.
\item $G$ has $k^3$ edges; each vertex has in-degree $k$ and out-degree $k$.
\item $G$ has exactly $k$ vertices with loops.  These may be called \textit{idempotent vertices}; the term ``idempotent'' comes from the central groupoid interpretation.
\end{enumerate}

The following definition introduces our objects of study.

\begin{definition}\label{def:GraphDef}
Let $k$ be a positive integer, and let $w$ be an element of multiplicative order $c$ in $\Zmod{k}$, where $c\divides k$.  Define graph $G\left(k,c,w\right)$ as follows:  the vertex set is the cartesian product $\Zmod{k}\times\Zmod{k}$, and the edge set consists of all directed edges $\left(p,i\right)\to\left(p+w^ji,j\right)$, where $p$, $i$, and $j$ are elements of $\Zmod{k}$.
\end{definition}

\textit{Note.} In the definition, $j$ is an element of $\Zk$, but it is used as an exponent in the expression $p+w^ji$.  This would ordinarily be ill-defined, but the condition that $c\divides k$ ensures that $w^j$ is well-defined, independent of the specific integer chosen to represent the class of $j$.

We propose the name \textit{\gnames} for these graphs, due to the resemblance of the edge rule to the operation in a semidirect product $\Zk\rtimes \Zk$.  The parameter $k$ in $G(k,c,w)$ will be called the \textit{\kname} and $c$ will be called the \textit{\cname} of the graph.  Parameter $w$ will be called the \textit{\wname} of the graph.

In the rest of this section, we state our main results.  The proofs will follow in the subsequent sections.

\begin{theorem}\label{thm:CorrectConstruction}
The graph $G\left(k,c,w\right)$ defined above is a central digraph; that is, for every $\left(p,i\right)$ and every $\left(q,j\right)$ in $\Zmod{k}\times \Zmod{k}$, there is a unique link from $\left(p,i\right)$ to $\left(q,j\right)$ in 
$G\left(k,c,w\right)$.  
\end{theorem}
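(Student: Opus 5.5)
Write a candidate link as $(r,\ell)$ and unwind the two edge conditions directly from Definition~\ref{def:GraphDef}. Fix $(p,i)$ and $(q,j)$. By the edge rule, the out-neighbors of $(p,i)$ are exactly the vertices $(p+w^{\ell}i,\ell)$ with $\ell\in\Zk$. Each such vertex has a different second coordinate, so distinct $\ell$ give distinct out-neighbors. In particular the second coordinate $\ell$ of an out-neighbor determines it completely.

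Likewise, $(r,\ell)\to(q,j)$ is an edge exactly when $q=r+w^{j}\ell$. Here the target's second coordinate is $j$, so the exponent in the edge rule is $j$. This is well defined because $c\divides k$, as noted after the definition.

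So $(r,\ell)$ is a link from $(p,i)$ to $(q,j)$ if and only if
\[
r=p+w^{\ell}i \quad\text{and}\quad q=r+w^{j}\ell .
\]
Substituting the first equation into the second gives the single condition
\[
w^{j}\ell + w^{\ell} i = q-p
\]
on $\ell\in\Zk$. Once $\ell$ is known, $r$ is determined by the first equation. Hence the number of links equals the number of solutions $\ell\in\Zk$ of this condition, and the theorem amounts to showing it has exactly one solution.

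The main obstacle is that $\ell$ appears both linearly and in the exponent $w^{\ell}$, so the condition is not a linear congruence. To handle this, I would write $\ell=a+cm$, where $a$ is the residue of $\ell$ modulo $c$. This makes sense in $\Zk$ because $c\divides k$. Since $w$ has order $c$, we get $w^{\ell}=w^{a}$, and the condition becomes
\[
w^{j}cm \equiv q-p-w^{j}a-w^{a}i \pmod{k}.
\]
Because $w^{j}$ is a unit, this reduces to $cm\equiv u_a \pmod k$ with $u_a=w^{-j}(q-p)-a-w^{a-j}i$.

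The step I expect to carry the weight is the following counting argument.
\begin{enumerate}
\item The congruence $cm\equiv u_a\pmod{k}$ is solvable only if $u_a\equiv 0\pmod{c}$.
\item Since $w\equiv 1\pmod{c}$ is not available in general, I would avoid that route and instead count globally. Consider the map $f:\Zk\to\Zk$, $f(\ell)=w^{j}\ell+w^{\ell}i$. It suffices to show $f$ is injective, which for a finite set is the same as bijective.
\item Suppose $f(\ell)=f(\ell')$. Reducing the residues of $\ell,\ell'$ modulo $c$ does not obviously help, so I would first try to prove $\ell\equiv\ell'\pmod{c}$.
\item Once $\ell\equiv\ell'\pmod c$, we have $w^{\ell}=w^{\ell'}$, so $w^{j}(\ell-\ell')=0$ and hence $\ell=\ell'$.
\end{enumerate}

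Proving $\ell\equiv\ell' \pmod c$ is the crux. Reducing modulo $c$ gives $w^{j}\ell+w^{\ell}i\equiv w^{j}\ell'+w^{\ell'}i\pmod c$. I would try to show $w\equiv1\pmod{q}$ for every prime $q\divides c$, since $c$ divides both $k$ and $\eulerphi(k)$ and the unit $w$ has order $c$. If that fails, the fallback is to check directly whether $w^{\ell}\equiv 1\pmod c$ holds in the needed cases. Given that, the condition modulo $c$ collapses to $w^{j}\ell\equiv w^{j}\ell'$, hence $\ell\equiv\ell'\pmod c$, and the injectivity argument closes.
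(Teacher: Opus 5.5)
Your reduction is sound and matches the paper's. A link is determined by its second coordinate $\ell$, and the link condition is $w^{j}\ell+w^{\ell}i=q-p$ (the paper's equation multiplied by $w^{j}$). So it suffices to show that $f(\ell)=w^{j}\ell+w^{\ell}i$ is injective, and your step 4 ($\ell\equiv\ell'\pmod c$ forces $\ell=\ell'$) is correct.

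The gap is the crux. You propose to get $\ell\equiv\ell'\pmod c$ by reducing modulo $c$ and letting the exponential terms cancel, but neither ingredient holds in general.

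First, $w\equiv 1\pmod q$ can fail for a prime $q\mid c$. In $G(18,6,5)$ the unit $5$ has order $6$ modulo $18$, yet $5\equiv 2\pmod 3$. The Fermat argument you have in mind works only for the \emph{smallest} prime $p$ of $c$, where $\gcd(c,p-1)=1$.

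Second, even when $w\equiv 1$ modulo every prime divisor of $c$, you do not get $w^{\ell}\equiv w^{\ell'}\pmod c$. Take $k=16$, $w=3$, $c=4$. Then $3\equiv -1\pmod 4$, so modulo $4$ the condition reads $(-1)^{j}(\ell-\ell')\equiv i\bigl((-1)^{\ell'}-(-1)^{\ell}\bigr)$. This does not collapse to $w^{j}\ell\equiv w^{j}\ell'$, and your fallback $w^{\ell}\equiv 1\pmod c$ fails for every odd $\ell$. The conclusion still holds in this example, but only in two stages: first parity (the right side is even), then $\ell\equiv\ell'\pmod 4$.

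That staged bootstrapping is the missing idea, and it is what Lemmas~\ref{lem:SmallestPrime}--\ref{lem:DivisibilityFull} supply. Write $d=\ell'-\ell$. Injectivity reduces to: if $g(1-w^{d})\equiv d\pmod k$ for some $g$, then $d\equiv 0\pmod k$.
\begin{itemize}
\item For the smallest prime $p$ of $c$, lift $w\equiv 1\pmod p$ to $w^{p^{i-1}}\equiv 1\pmod{p^{i}}$.
\item Then $p^{i}\mid d$ gives $p^{i+1}\mid 1-w^{d}$, hence $p^{i+1}\mid d$. This continues up to the full power $p^{a}$ dividing $c$.
\item Then pass to $w^{p^{a}}$, of order $c/p^{a}$, and repeat with the next prime.
\end{itemize}

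Alternatively, your own route can be repaired. The map $f$ reduced modulo $c$ is a well-defined map $\Zmod{c}\to\Zmod{c}$ of the same shape, $x\mapsto \bar u x+\bar g\,\bar w^{x}$, with $\bar u,\bar w$ units and the order of $\bar w$ dividing $c$. Since $c\le\eulerphi(k)<k$ for $k\ge 2$, use strong induction on $k$ for the statement ``$x\mapsto ux+gw^{x}$ is a bijection of $\Zk$ whenever $u,w$ are units and the order of $w$ divides $k$.'' It makes the reduced map injective, which is exactly $\ell\equiv\ell'\pmod c$.

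As written, though, your key step rests on a collapse that does not happen, and it remains unproved.
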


For any value of $k$, one can use the parameters $c=1$ and $w=1$ to construct a central digraph.  We shall not prove it, but this choice of parameters corresponds to what Knuth~\cite{knuth1970notes} calls the {\it natural} central groupoid on $k^2$ elements.  Other parameters do give rise to distinct graphs.  For example, for any even integer $k>2$, one can choose $c=2$ and $w=-1$ to obtain a simple example of a central digraph which is distinct from the ``natural'' one.

When $w$ is a unit mod $k$, multiplication by $w$ is a permutation of $\Zk$.  The cycle type of this permutation is reflected in the structure of the corresponding \gname, as the following proposition makes precise.

\begin{theorem}\label{thm:DistinctGraphs}
If $G\left(k,c_1,w_1\right) \isomorphic G\left(k,c_2,w_2\right)$, then multiplication by $w_1$ and multiplication by $w_2$ are permutations of the same cycle type.  In particular, $c_1=c_2$.
\end{theorem}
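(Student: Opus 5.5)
The plan is to attach to each vertex a number that is defined purely in terms of the digraph structure, so that isomorphisms preserve it, and then to check that in $G(k,c,w)$ this number equals the length of a cycle of multiplication by $w$. Counting vertices by the value of this number will then recover the cycle type.

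\emph{Step 1: an intrinsic labelling of vertices by idempotents.} A vertex $(p,i)$ has a loop iff there is $j$ with $(p+w^j i,j)=(p,i)$. This forces $j=i$ and $w^i i=0$, hence $i=0$ since $w$ is a unit. So the idempotent vertices are exactly the $(p,0)$, and an isomorphism carries idempotents to idempotents. The out-neighbours of $(p,0)$ are the vertices $(p,j)$, $j\in\Zk$. Therefore each vertex $(p,i)$ has exactly one idempotent in-neighbour, namely $(p,0)$. Write $\alpha(x)$ for the unique idempotent in-neighbour of $x$. This definition uses only the digraph structure, so any isomorphism $\phi\colon G(k,c_1,w_1)\to G(k,c_2,w_2)$ satisfies $\alpha(\phi(x))=\phi(\alpha(x))$.

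\emph{Step 2: the invariant.} For a vertex $x$, set $n(x)=\bigl|\{\alpha(y) : x\to y\}\bigr|$, the number of distinct idempotents arising as $\alpha$ of an out-neighbour of $x$. By Step 1 we have $n(\phi(x))=n(x)$, because $\phi$ is a bijection on out-neighbourhoods and commutes with $\alpha$. In $G(k,c,w)$ the out-neighbours of $x=(p,i)$ are $(p+w^j i,j)$, so
\[
\{\alpha(y): x\to y\}=\{(p+w^j i,0) : j\in\Zk\}.
\]
As $j$ runs over $\Zk$ with $c\divides k$, the element $w^j$ takes every value in $\langle w\rangle$. Hence $n(p,i)$ is the size of the orbit of $i$ under multiplication by $w$, that is, the length of the cycle containing $i$.

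\emph{Step 3: recovering the cycle type.} For each $d$, the number of vertices $x$ with $n(x)=d$ equals $k$ times the number of $i\in\Zk$ lying on cycles of length $d$, because $p$ is free. This count is preserved by $\phi$. So for every $d$, multiplication by $w_1$ and by $w_2$ have the same number of elements on $d$-cycles, and hence the same number $(\text{that count})/d$ of $d$-cycles, which is the same cycle type. Finally, the cycle through $1$ has length exactly the multiplicative order of $w$, and every other cycle length divides it. Therefore $c=\max_x n(x)$, and $c_1=c_2$ follows.

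I expect the main obstacle to be Step 1, namely finding a graph-theoretic description of the first coordinate. The key observation is that the out-neighbourhoods of the idempotents partition the vertex set; once that is available, the remaining steps are direct computations from Definition~\ref{def:GraphDef}.
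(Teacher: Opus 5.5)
Your proposal is correct and is essentially the paper's proof. Your $n(x)$ is exactly the paper's ``block count'', and your unique-idempotent-in-neighbour map $\alpha$ makes explicit the paper's unproved remark that isomorphisms carry blocks to blocks; the only cosmetic difference is that you count over all $k^2$ vertices (picking up a factor of $k$), whereas the paper lists block counts within a single block.
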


Note that the parameters $k$ and $c$ alone are not sufficient to determine the graph up to isomorphism; the choice of \wname\ may matter.  For example, $G(8,2,-1)$ and $G(8,2,5)$ are non-isomorphic graphs.  This is guaranteed by the preceding theorem since multiplication by $5$ permutes $\Zmod{8}$ as $\left(1\ 5\right)\left(3\ 7\right)$, whereas multiplication by $-1$ is $\left(1\ 7\right)\left(2\ 6\right)\left(3\ 5\right)$ -- different cycle types.  One can also check with a computer that the adjacency matrix of the former graph has rank $12$, while the latter has rank $14$.  

I \emph{conjecture} that the converse is also true -- if multiplication by $w_1$ and by $w_2$ have the same cycle type (as permutations of $\Zk$), then $G(k,c,w_1) \isomorphic G(k,c,w_2)$.  This is true in many special cases, but so far I do not have a general proof.

One can see immediately from the definition that the vertices of the form $(x,0)$ are the idempotent vertices in $G(k,c,w)$.  Any graph automorphism must permute these vertices among themselves. The following theorem establishes the importance of this action to determining the graph automorphism group.  

\begin{theorem}\label{thm:AutosInSk}
Any automorphism of $G\left(k,c,w\right)$ is determined by its action on the set of $k$ idempotent vertices, and the automorphism group of $G\left(k,c,w\right)$ is isomorphic to a subgroup of the symmetric group $S_k$.
\end{theorem}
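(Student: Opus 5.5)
The plan is to show that every vertex of $G(k,c,w)$ is the link between two idempotent vertices, in exactly one way. An automorphism preserves edges, so it must send links to links. It is therefore pinned down by where it sends the idempotent vertices.

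\emph{Step 1: compute the links between idempotents.} By Definition~\ref{def:GraphDef}, the out-edges of an idempotent vertex $(p,0)$ are $(p,0)\to(p+w^j\cdot 0,\,j)=(p,j)$ for $j\in\Zk$. An edge from $(p,j)$ to an idempotent vertex uses second coordinate $0$ in the target, so it is $(p,j)\to(p+w^0 j,\,0)=(p+j,0)$. Hence, for any $p,q\in\Zk$, the vertex $(p,q-p)$ is a link from $(p,0)$ to $(q,0)$. By Theorem~\ref{thm:CorrectConstruction} it is the unique such link. Conversely, every vertex $(p,i)$ is the link from $(p,0)$ to $(p+i,0)$. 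So the map
\[
(p,q)\mapsto \text{the link from } (p,0) \text{ to } (q,0)
\]
is a bijection from ordered pairs of idempotent vertices onto all $k^2$ vertices.

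\emph{Step 2: automorphisms are determined by their action on idempotents.} Let $\phi$ be an automorphism. It maps loops to loops, so it permutes the set $I=\{(x,0)\}$ of idempotent vertices; the paper notes these are exactly the vertices with loops. If $y$ is a link from $x$ to $z$, then $\phi(x)\to\phi(y)\to\phi(z)$ are edges. By uniqueness of links, $\phi(y)$ is the link from $\phi(x)$ to $\phi(z)$. Applying this with $y=(p,i)$, $x=(p,0)$, $z=(p+i,0)$ gives
\[
\phi\bigl((p,i)\bigr)=\text{the link from } \phi\bigl((p,0)\bigr) \text{ to } \phi\bigl((p+i,0)\bigr).
\]
The right-hand side depends only on $\phi|_I$. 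Hence two automorphisms that agree on $I$ agree everywhere.

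\emph{Step 3: embed into $S_k$.} The restriction map $\Aut G(k,c,w)\to \mathrm{Sym}(I)\isomorphic S_k$, $\phi\mapsto\phi|_I$, is a group homomorphism. By Step~2 it is injective, so $\Aut G(k,c,w)$ is isomorphic to its image, a subgroup of $S_k$.

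There is no real obstacle in this argument. The only point needing care is the edge computation in Step~1, namely checking that the exponent $w^0=1$ appears on the edge into $(q,0)$, so that the link does not depend on $w$.
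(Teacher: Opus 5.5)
Your proposal is correct and follows the paper's own argument: the paper likewise observes that $(p,i)$ is the unique link from $(p,0)$ to $(p+i,0)$. It then deduces $\phi(p,i)=\left(\pi(p),\pi(p+i)-\pi(p)\right)$ and concludes that sending an automorphism to its permutation of the idempotents is an injective homomorphism into $S_k$.
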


The simplest choice of parameters leads to a graph with an easy-to-understand, and maximal, automorphism group.

\begin{theorem}\label{thm:AutoGroup1}
For any positive integer $k$, the automorphism group of $G(k,1,1)$ is isomorphic to $S_k$.
\end{theorem}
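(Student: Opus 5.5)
The plan is to combine Theorem~\ref{thm:AutosInSk} with an explicit construction. By Theorem~\ref{thm:AutosInSk}, restricting an automorphism to the $k$ idempotent vertices $(x,0)$ gives an injective homomorphism $\Aut G(k,1,1)\to S_k$. So it suffices to show this map is surjective: every permutation $\sigma$ of $\Zk$, viewed as the permutation $(x,0)\mapsto(\sigma(x),0)$ of the idempotents, extends to a graph automorphism.

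First I would record the structure of $G(k,1,1)$. With $c=1$ and $w=1$, the edge rule becomes $(p,i)\to(p+i,j)$ for all $j$. So the out-neighbourhood of $(p,i)$ is the whole ``column'' $\{(p+i,j): j\in\Zk\}$. In other words, $(p,i)\to(q,j)$ is an edge exactly when $q=p+i$, with no condition on $j$. It is convenient to reparametrize each vertex $(p,i)$ by the pair $(p,\,p+i)$. In these coordinates, $(a,b)\to(a',b')$ is an edge precisely when $a'=b$, so the graph is the ``line digraph'' of the complete digraph with loops on $\Zk$. In this picture any permutation of the underlying set should act as an automorphism.

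Concretely, given $\sigma\in S_k$ (a permutation of $\Zk$), define
\[
\varphi_\sigma(p,i)=\bigl(\sigma(p),\ \sigma(p+i)-\sigma(p)\bigr).
\]
I will verify three things.
\begin{enumerate}
\item $\varphi_\sigma$ is a bijection. It is the composite of $(p,i)\mapsto(p,p+i)$, then $\sigma\times\sigma$, then $(a,b)\mapsto(a,b-a)$, each of which is a bijection.
\item $\varphi_\sigma$ preserves edges. The edge $(p,i)\to(p+i,j)$ maps to $(\sigma(p),\sigma(p+i)-\sigma(p))\to(\sigma(p+i),\,\ast)$. This is an edge because $\sigma(p)+(\sigma(p+i)-\sigma(p))=\sigma(p+i)$, and the second coordinate of the target is unconstrained.
\item $\varphi_\sigma$ restricts to $\sigma$ on the idempotents, since $\varphi_\sigma(x,0)=(\sigma(x),0)$.
\end{enumerate}
Since $\varphi_\sigma$ is an edge-preserving bijection of a finite digraph, it maps the edge set injectively into itself and hence onto it. So $\varphi_\sigma$ is an automorphism.

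Thus the restriction map is onto $S_k$. Together with the injectivity from Theorem~\ref{thm:AutosInSk}, this gives $\Aut G(k,1,1)\isomorphic S_k$. There is no real obstacle here beyond guessing the right formula for $\varphi_\sigma$. The change of coordinates $(p,i)\mapsto(p,p+i)$ is what makes that formula transparent, and the only care needed is to match the restriction to the idempotents with the embedding supplied by Theorem~\ref{thm:AutosInSk}.
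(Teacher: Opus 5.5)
Your proposal is correct and follows the paper's proof. The paper likewise shows that every permutation $\pi$ of $\Zk$ induces $\phi(p,i)=(\pi(p),\pi(p+i)-\pi(p))$, checks that it is a bijection carrying each edge $(p,i)\to(p+i,j)$ to an edge, and combines this with the embedding from Theorem~\ref{thm:AutosInSk}. The differences are cosmetic: you get bijectivity by factoring through $(p,i)\mapsto(p,p+i)$ instead of writing down the inverse explicitly, and you add the finite edge-counting step, which the paper leaves implicit, to conclude that non-edges are also preserved.
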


In general, though, the automorphism groups are much smaller than this and show a wide variety of structures.  The following two theorems give specific families of examples, of different complexity.

\begin{theorem}\label{thm:AutoGroup2}
For any positive, even integer $k$, the automorphism group of $G(k,2,-1)$ is isomorphic to $\text{Aff}\left(\Zk\right)$,
that is, $\Zk\rtimes \Zmodmult{k}$.
\end{theorem}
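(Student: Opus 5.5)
The plan is to show that $\Aut G(k,2,-1)$ contains a copy of $\text{Aff}(\Zk)$, and then, using Theorem~\ref{thm:AutosInSk}, that every automorphism acts on the idempotents by an affine map.

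\emph{Step 1: affine maps give automorphisms.} For $a\in\Zk$ and $u\in\Zmodmult{k}$, consider the map $(p,i)\mapsto(up+a,\,ui)$. Since $k$ is even, $u$ is odd, so $(-1)^{uj}=(-1)^j$ is well defined in $j\in\Zk$. The edge $(p,i)\to(p+(-1)^j i,\,j)$ is therefore sent to $(up+a,ui)\to(up+a+(-1)^{uj}ui,\,uj)$, which is again an edge. These maps form a group, and on the idempotents $(x,0)$ they act as $x\mapsto ux+a$. This action is faithful, so we get an injective homomorphism $\text{Aff}(\Zk)\to\Aut G(k,2,-1)$.

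\emph{Step 2: relabel vertices by pairs of idempotents.} The link from $(x,0)$ to $(z,0)$ is $(x,z-x)$, because $(x,0)\to(x,j)$ and $(x,j)\to(x+j,0)$. So the vertex $(p,i)$ is the unique link from idempotent $p$ to idempotent $p+i$. Write $[a,b]$ for the link from $a$ to $b$. An automorphism $\sigma$ permutes the idempotents by some $f$ and preserves links, so $\sigma[a,b]=[f(a),f(b)]$. This re-derives Theorem~\ref{thm:AutosInSk} in this case. In this notation the edge rule becomes
\[
[a,b]\to[c,d]\iff c=a+(-1)^{d-c}(b-a),
\]
that is, $c=b$ when $d-c$ is even, and $c=2a-b$ when $d-c$ is odd.

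\emph{Step 3 (main step): $f$ commutes with reflections.} The first coordinates of the out-neighbours of $[a,b]$ form the set $\{b,\,2a-b\}$. This set is a graph invariant once we read vertices as pairs, so
\[
\{f(b),\,2f(a)-f(b)\}=\{f(b),\,f(2a-b)\}.
\]
If $2a-b\neq b$, the set has two elements and hence $f(2a-b)=2f(a)-f(b)$. If $2a-b=b$, the set is a singleton, so its image is a singleton, which forces $2f(a)-f(b)=f(b)$; the identity $f(2a-b)=2f(a)-f(b)$ then holds again. I expect the only care needed here is in this degenerate case $b=a+k/2$, and in checking that the first coordinate of $\sigma[c,d]$ really is $f(c)$, which follows from the pair labelling.

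\emph{Step 4: conclude.} Taking $a=n$ and $b=n-1$ gives the recurrence $f(n+1)=2f(n)-f(n-1)$. By induction, $f(n)=f(0)+n\,(f(1)-f(0))$ for all $n$, so $f$ is affine. Since $f$ is a bijection of $\Zk$, the slope $f(1)-f(0)$ is a unit. By Theorem~\ref{thm:AutosInSk}, $\sigma$ is determined by $f$, so $\sigma$ coincides with the automorphism from Step 1. Hence $\Aut G(k,2,-1)\isomorphic\text{Aff}(\Zk)=\Zk\rtimes\Zmodmult{k}$.
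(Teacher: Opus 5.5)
Your proof is correct, and its overall shape matches the paper's. Affine maps with unit slope give automorphisms: the paper gets this from Proposition~\ref{prop:AffineAutomorphisms}, since every unit modulo an even $k$ is $\equiv 1 \pmod 2$, while you verify it directly. Then every automorphism's idempotent permutation satisfies $f(n+1)=2f(n)-f(n-1)$, hence is affine.

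The real difference is in how you derive that recurrence. The paper writes out the isomorphism equation $\Iso(a,i,j)$ and invokes Periodicity (Proposition~\ref{prop:Periodicity}) to see that $\pi(x+j)-\pi(x)\equiv j \pmod 2$, i.e.\ automorphisms preserve the parity of the index coordinate. This collapses the exponent to $(-1)^j$ and gives the clean equation $\pi(a+(-1)^j i)=\pi(a)+(-1)^j[\pi(a+i)-\pi(a)]$ for all $a,i,j$, which the paper then specializes to $i=j=1$. You instead compare only the unordered sets of block numbers of out-neighbours, $\{f(b),2f(a)-f(b)\}=\{f(b),f(2a-b)\}$. Since $f(b)$ lies in both sets, the remaining elements must agree, so you never need to know which parity class is sent to which. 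Periodicity is therefore unnecessary, at the cost of a small degenerate case that in your application arises only when $k=2$ and that you handle correctly.

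Your route is shorter and self-contained for $w=-1$. However, it relies on the out-neighbours of a vertex meeting at most two blocks, one of which is already identified. For larger $c$, the set of block numbers $\{a+w^j(b-a)\}$ can have up to $c$ elements, and matching unordered sets no longer pins down individual values. That is why the paper builds the periodicity/truncation machinery, which it then reuses for $G(p^2,p,1+p)$.
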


\begin{theorem}\label{thm:AutoGroup3}
If $p$ is prime, then the automorphism group of $G(p^2, p, 1+p)$ is a nonabelian group of order $p^{p+1}\cdot(p-1)$.  The group is a semidirect product, with
\begin{enumerate}
\item A normal, elementary abelian subgroup of order $p^p$, and
\item A complementary subgroup of order $p\cdot(p-1)$ isomorphic to $\text{Aff}\left(\FieldF_p\right)$.
\end{enumerate}
\end{theorem}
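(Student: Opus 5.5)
The plan is to use Theorem~\ref{thm:AutosInSk} to turn the problem into one about permutations of $\Zmod{p^2}$. Then we determine exactly which permutations extend to automorphisms, and finally read off the group structure.

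\emph{Step 1: coordinates in terms of idempotents.} Write $k=p^2$ and $w=1+p$, so $w^j=1+jp$, which depends only on $j \bmod p$. The out-neighbours of the idempotent $(x,0)$ are the vertices $(x,j)$. The edge $(x,j)\to(y,0)$ exists exactly when $y=x+j$. Hence $(x,d)$ is the unique link from $(x,0)$ to $(x+d,0)$. Consequently, an automorphism inducing the permutation $\sigma$ on idempotents must send $(x,d)\mapsto(\sigma x,\sigma(x+d)-\sigma x)$. Label each vertex by the ordered pair of idempotents $[a,a']$ it links, so $(a,i)=[a,a+i]$. The edge rule $(a,i)\to(a+(1+jp)i,j)$ becomes
\[
[a,a']\to[b,b'] \iff b-a' = p\,(b'-b)(a'-a) \pmod{p^2}.
\]
So $\Aut G$ is the group of permutations $\sigma$ of $\Zmod{p^2}$ preserving this ternary relation among $(a,a',b')$, with $b$ determined by them. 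This also re-proves injectivity into $S_{p^2}$.

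\emph{Step 2: necessary conditions.} Reducing modulo $p$, the relation forces $b\equiv a'$. I would use this to show that $\sigma$ preserves congruence mod $p$, and so induces a permutation $\bar\sigma$ of $\FieldF_p$. One way is to note that for fixed $a'$ the set of $b$ attainable as $a,b'$ vary is exactly $a'+p\Zmod{p^2}$. Next, since the right-hand side depends only on residues mod $p$, plug in residues. Then $\sigma(a'+pt)-\sigma(a')$ must be a function of residues that is forced to be $p\,\lambda(\bar a')\,t$ for some nonzero $\lambda$. Comparing the two sides then gives a functional equation on $\FieldF_p$ of the shape $\lambda(\bar b)(\bar\sigma \bar b'-\bar\sigma \bar b)(\bar\sigma\bar a'-\bar\sigma\bar a)$ versus $(\bar b'-\bar b)(\bar a'-\bar a)$. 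Taking special values should show that $\bar\sigma$ preserves the cross-ratio-like product $(y-x)(z-w)$ up to a scalar. This forces $\bar\sigma\in\text{Aff}(\FieldF_p)$, and it determines $\lambda$ as a function of the linear coefficient.

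\emph{Step 3: sufficiency and structure.} First, the maps $\sigma_f(x)=x+p\,f(\bar x)$, for arbitrary $f:\FieldF_p\to\FieldF_p$, preserve the relation. The left side changes by $p(f(\bar b)-f(\bar a'))=0$ because $b\equiv a'$, and the right side is unchanged mod $p$. These maps form an elementary abelian group of order $p^p$. It is the kernel of $\sigma\mapsto\bar\sigma$, hence normal. Translations $x\mapsto x+t$ clearly preserve the relation and realize all translations of $\FieldF_p$. The main obstacle is realizing the scalings $\bar x\mapsto u\bar x$. The naive map $x\mapsto ux$ multiplies the left side by $u$ but the right side by $u^2$, so it fails unless $u\equiv1$. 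I would search for a corrected lift $x\mapsto ux+p\,g(x)$ (for instance $g$ quadratic in a digit decomposition $x=x_0+px_1$, $0\le x_0<p$). I would solve for $g$ using the functional equation from Step 2, checking that $\lambda$ compensates the factor $u$. Once one such lift exists for each $u$, the image of $\sigma\mapsto\bar\sigma$ is all of $\text{Aff}(\FieldF_p)$, which gives order $p^p\cdot p(p-1)=p^{p+1}(p-1)$. For the splitting, I would choose lifts closed under composition (translations by $0,\dots,p-1$ interpreted suitably, together with the corrected scalings) forming a subgroup isomorphic to $\text{Aff}(\FieldF_p)$. Nonabelianness is immediate, since conjugating $\sigma_f$ by a translation shifts $f$.
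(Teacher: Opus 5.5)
Your Steps 1--2 reproduce the content of Propositions~\ref{prop:Periodicity} through \ref{prop:TwoValues} by a different and cleaner route, but the proposal has a genuine gap at the point you call ``the main obstacle'': you never exhibit an automorphism whose induced map on $\FieldF_p$ is a nontrivial scaling. ``I would search for a corrected lift'' is a plan, not an argument, and the rest of the theorem depends on it. Without such lifts you only get $|\Aut G|\le p^{p+1}(p-1)$ and no complement. The splitting also cannot come for free. The orders $p^p$ and $p(p-1)$ are not coprime, so Schur--Zassenhaus is unavailable. The honest translation $x\mapsto x+1$ has order $p^2$, so it lies in no subgroup isomorphic to $\text{Aff}(\FieldF_p)$. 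A complement must therefore be written down explicitly. This is the content of the paper's Proposition~\ref{prop:ComplementaryAutomorphisms}. It writes $a=qp+r$ and sets $\pi(q,r)=(m^2q,\,mr+b)$ digit-wise. These maps compose as $(m',b')\circ(m,b)=(m'm,\,m'b+b')$ and meet $\ker T$ trivially. Your functional equation also has $\lambda$ on the wrong side. The correct identity is $\lambda(\bar a')(\bar b'-\bar b)(\bar a'-\bar a)=(\bar\sigma\bar b'-\bar\sigma\bar b)(\bar\sigma\bar a'-\bar\sigma\bar a)$, which gives $\bar\sigma(z)=uz+\beta$ and $\lambda=u^2$. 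As you wrote it, you would get $\lambda=u^{-2}$, and a lift built to match that fails unless $u^4=1$.

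Within your own framework the gap closes in a few lines once $\lambda$ is placed correctly. One point in Step~2 needs a line of justification. Linearity of $D(a',t)=(\sigma(a'+pt)-\sigma(a'))/p$ in $t$ is not forced by the product identity at a single $a'$; that only makes $D(a',\cdot)$ a scalar times a multiplicative bijection. It does follow from $D(a',s+t)=D(a',s)+D(a'+ps,t)$ together with the fact that $D$ depends on $a'$ only through $\bar a'$. The transformed relation then holds \emph{if and only if} three conditions hold: $\sigma$ induces a well-defined $\bar\sigma$ on $\FieldF_p$; $\bar\sigma(z)=uz+\beta$; and $\sigma(x+pt)=\sigma(x)+pu^2t$ for all $x,t$. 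Sufficiency is immediate, since both sides become $pu^2(\bar b'-\bar b)(\bar a'-\bar a)$. This characterization gives three things at once:
\begin{enumerate}
\item the order, by counting $p(p-1)$ choices of $\bar\sigma$ and then $p$ choices for each of $\sigma(0),\dots,\sigma(p-1)$;
\item a lift of the scaling with no quadratic correction, namely $x_0+px_1\mapsto ux_0+pu^2x_1$ with $0\le x_0,x_1<p$;
\item a one-line verification of the paper's digit-wise family, in place of its carry-and-borrow bookkeeping.
\end{enumerate}
With that addition your argument would be complete, and somewhat more transparent than the paper's.
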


For the simplest example of Theorem~\ref{thm:AutoGroup3}, using $p=2$, we find that $G(4,2,3)$ is a graph on $16$ vertices with an automorphism group which is nonabelian, of order $8$.  Furthermore, it has a normal $Z_2\times Z_2$ subgroup, so it must be the $8$-element dihedral group.

\section{Proofs of Theorems~\ref{thm:CorrectConstruction} and \ref{thm:DistinctGraphs}}

Before proving that Definition~\ref{def:GraphDef} does in fact define a central digraph, we will establish a few purely number-theoretic results.  The reader might prefer to skip ahead to the short proof of Theorem~\ref{thm:CorrectConstruction} and return to the next few lemmas afterward if the details are of interest.

\begin{lemma}\label{lem:SmallestPrime}
Suppose $w$ has multiplicative order $c$ in $\Zk$, that $c\divides k$, and $p$ is the smallest prime divisor of $c$.  Then $$w^{\left(p^{i-1}\right)}\equiv 1\pmod{p^i}$$ for each $i\ge 1$.
\end{lemma}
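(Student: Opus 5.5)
The plan is to prove the case $i=1$ by a group-order argument and then lift to higher powers of $p$ by induction using the binomial theorem. Throughout, fix an integer representative of $w$ (also called $w$). Since $p\divides c$ and $c\divides k$, we have $p\divides k$, so the residue of $w$ modulo $p$ does not depend on the representative. For those $i$ with $p^i\divides k$, the congruence modulo $p^i$ is likewise independent of the choice. Once $w\equiv 1\pmod p$ is known, the inductive step below is a statement about integers and holds for every $i\ge 1$.

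\emph{Base case $i=1$.} Since $w$ is a unit modulo $k$ and $p\divides k$, the reduction $\bar w$ of $w$ lies in $(\Int/p\Int)^{\times}$, a group of order $p-1$. The order of $\bar w$ divides $p-1$. It also divides $c$, because $w^c\equiv 1\pmod k$ implies $w^c\equiv 1\pmod p$. Every prime divisor of $c$ is at least $p$, while every prime divisor of $p-1$ is less than $p$. Hence $\gcd(c,p-1)=1$, so $\bar w$ has order $1$, that is, $w\equiv 1\pmod p$.

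\emph{Inductive step.} Suppose $w^{p^{i-1}}=1+ap^i$ for some integer $a$. Then
\[
w^{p^{i}}=(1+ap^i)^p=1+p\cdot ap^i+\binom{p}{2}a^2p^{2i}+\cdots+a^pp^{pi}.
\]
The linear term is divisible by $p^{i+1}$. Each term with exponent $m\ge 2$ has a factor $p^{mi}$ with $mi\ge 2i\ge i+1$. Therefore $w^{p^i}\equiv 1\pmod{p^{i+1}}$, which completes the induction.

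The only real content is the base case, where the minimality of $p$ among the prime divisors of $c$ is used to force $\gcd(c,p-1)=1$. The lifting step is routine. Notably, it does not need the usual care required for $p=2$, because we are only propagating congruences of the form $\equiv 1$, not computing exact orders.
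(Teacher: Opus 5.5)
Your proof is correct and follows essentially the same route as the paper. In the base case the minimality of $p$ gives $\gcd(c,p-1)=1$, which forces $w\equiv 1 \pmod{p}$; the paper phrases this via Fermat's little theorem and you phrase it via the order of $w$ in $(\mathbb{Z}/p\mathbb{Z})^{\times}$, but it is the same argument. The lifting step is the same binomial-theorem induction, and your remarks on independence of the representative and the bound $mi\ge 2i\ge i+1$ on the higher binomial terms just fill in details the paper leaves implicit.
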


\begin{proof}
This proceeds by induction on the exponent $i$.  For the base case, note that
$w^c\equiv 1\pmod{k}$
implies
$w^c\equiv 1\pmod{p},$
since $p\divides k$.
By Fermat's Little Theorem, we also know that 
$w^{p-1}\equiv 1\pmod{p}.$
Since $p$ is the smallest prime divisor of $c$, it follows that $\gcd(c,p-1)=1$, hence $w\equiv 1\pmod{p}$.

Now, inductively, suppose $w^{\left(p^{i-1}\right)}\equiv 1\pmod{p^i}$ for some $i$ with $1\le i$.  That means
$$w^{\left(p^{i-1}\right)} =  1+p^iq$$
for some integer $q$.  Taking the $p$-th power of both sides and applying the binomial theorem on the right,
$$w^{\left(p^i\right)} = 1+\binom{p}{1}p^iq+\text{terms divisible by $p^{i+1}$},$$
hence $w^{\left(p^i\right)}\equiv 1\pmod{p^{i+1}}$, completing the inductive step.

By induction, the conclusion holds for every $i\ge 1$.
\end{proof}

\begin{lemma}\label{lem:DivisibilitySPF}
Suppose $w$ has multiplicative order $c$ in $\Zk$, and $c\divides k$.  Let $p$ denote the smallest prime divisor of $c$, factoring $c$ as $p^a\cdot m$ with $p\nmid m$.
 Let $g$ denote any element of $\Zk$.  If $g\left(1-w^d\right)\equiv d\pmod{k}$, then $d\equiv 0\pmod{p^a}$.
\end{lemma}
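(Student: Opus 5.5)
The plan is to reduce the congruence modulo $p^a$ and then bootstrap divisibility of $d$ by successive powers of $p$, one power at a time, using Lemma~\ref{lem:SmallestPrime}. Here $d$ is an integer exponent; since $w$ is a unit, $w^d$ makes sense for every integer $d$, including negative ones. If $c=1$ there is no smallest prime divisor, and $p^a=1$ trivially divides $d$. So assume $c>1$.

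Since $p^a\divides c$ and $c\divides k$, we have $p^a\divides k$. Hence the hypothesis $g\left(1-w^d\right)\equiv d\pmod{k}$ implies
$$g\left(1-w^d\right)\equiv d\pmod{p^i}\quad\text{for every } 0\le i\le a.$$
I will prove by induction on $i$, for $0\le i\le a$, that $p^i\divides d$. The base case $i=0$ is trivial.

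For the inductive step, suppose $p^{i-1}\divides d$ for some $1\le i\le a$, and write $d=p^{i-1}t$. By Lemma~\ref{lem:SmallestPrime}, $w^{(p^{i-1})}\equiv 1\pmod{p^i}$. Raising this to the $t$-th power (legitimate for negative $t$ as well, since the left side is a unit) gives $w^d\equiv 1\pmod{p^i}$. Therefore $1-w^d\equiv 0\pmod{p^i}$, so
$$d\equiv g\left(1-w^d\right)\equiv 0\pmod{p^i},$$
which is $p^i\divides d$. Taking $i=a$ gives the conclusion $d\equiv 0\pmod{p^a}$.

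The argument has no serious obstacle. The only point needing care is that the reductions are valid: every modulus $p^i$ used must divide $k$, which holds because $i\le a$ and $p^a\divides c\divides k$. The number theory is supplied entirely by Lemma~\ref{lem:SmallestPrime}.
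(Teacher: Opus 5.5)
Your proof is correct and follows the paper's own argument: induct on the power of $p$, use Lemma~\ref{lem:SmallestPrime} to get $w^d\equiv 1\pmod{p^i}$ once $p^{i-1}\mid d$, then read $d\equiv 0\pmod{p^i}$ off the hypothesis reduced mod $p^i$. Your explicit check that every modulus $p^i$ with $i\le a$ divides $k$, and your handling of the trivial case $c=1$, are small points the paper leaves implicit.
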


\begin{proof}
This also proceeds by induction on the power of $p$.  By Lemma~\ref{lem:SmallestPrime}, $w\equiv 1\pmod{p}$, so reducing the main hypothesis mod $p$ tells us $0\equiv d\pmod{p}$, establishing a base case.

Now suppose inductively that $d\equiv 0\pmod{p^i}$ for some $i\ge
1$.  Say $d=p^iq,$ where $q\in\Int$.  Reducing the main hypothesis
mod $p^{i+1}$ gives 
$$g\left(1-w^{p^iq}\right)\equiv p^iq\pmod{p^{i+1}}.$$

Lemma~\ref{lem:SmallestPrime} implies that $1-w^{p^iq}\equiv 0\pmod{p^{i+1}}$, so $p\divides q$ and in fact $d\equiv 0\pmod{p^{i+1}}$, completing the inductive step.

This continues inductively, up to $d\equiv 0\pmod{p^a}$.
\end{proof}

The preceding lemma is applied iteratively to establish the following stronger form of the the same idea.

\begin{lemma}\label{lem:DivisibilityFull}
Suppose $w$ has multiplicative order $c$ in $\Zk$ and $c|k$. Let $g$ denote any element of $\Zk$.  If $g\left(1-w^d\right)\equiv d\pmod{k}$, then $d\equiv 0\pmod{k}$.
\end{lemma}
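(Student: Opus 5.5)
The strategy is to show first that $c \divides d$; the conclusion then follows at once. Indeed, if $c\divides d$ then $w^d\equiv 1\pmod{k}$, so the hypothesis $g(1-w^d)\equiv d\pmod{k}$ collapses to $0\equiv d\pmod{k}$. To obtain $c\divides d$ I would peel off the prime factors of $c$ one at a time, smallest first, using Lemma~\ref{lem:DivisibilitySPF}. The difficulty is that after the first step the hypothesis no longer has exactly the shape that lemma requires. So I would prove a slightly stronger statement by induction.

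\emph{Strengthened claim.} Let $w$ have multiplicative order $c$ in $\Zk$ with $c\divides k$, let $g\in\Zk$, and let $u$ be an integer with $\gcd(u,c)=1$. If
$$g\left(1-w^d\right)\equiv u\,d\pmod{k},$$
then $c\divides d$.

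First I would check that Lemma~\ref{lem:DivisibilitySPF} still holds with $d$ replaced by $u\,d$ on the right, provided $p\nmid u$. Its proof goes through verbatim:
\begin{enumerate}
\item In the base case, reducing mod $p$ gives $u\,d\equiv 0\pmod{p}$, and since $p\nmid u$ this yields $p\divides d$.
\item In the inductive step, $1-w^{p^iq}\equiv 0\pmod{p^{i+1}}$ by Lemma~\ref{lem:SmallestPrime}. Hence $p^{i+1}\divides u\,p^iq$, so $p\divides uq$, and therefore $p\divides q$.
\end{enumerate}
Lemma~\ref{lem:SmallestPrime} depends only on $w$, $c$ and $k$, so it needs no change.

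Next I would prove the strengthened claim by induction on the number of distinct prime divisors of $c$. If $c=1$ there is nothing to show. Otherwise, let $p$ be the smallest prime divisor of $c$ and write $c=p^a m$ with $p\nmid m$. The modified Lemma~\ref{lem:DivisibilitySPF} gives $d=p^a d'$. Set $w'=w^{p^a}$, which has multiplicative order exactly $m$ in $\Zk$, and note $m\divides c\divides k$. Then $w^d=(w')^{d'}$, and the hypothesis becomes
$$g\left(1-(w')^{d'}\right)\equiv (u p^a)\,d'\pmod{k}.$$
Here $\gcd(up^a,m)=1$, because $\gcd(u,c)=1$ and $p\nmid m$. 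Since $m$ has fewer distinct prime factors than $c$, the inductive hypothesis applies to $(w',m,u p^a)$ and gives $m\divides d'$. Hence $c=p^a m\divides d$.

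Taking $u=1$ in the claim and then applying the observation from the first paragraph proves the lemma. The main obstacle is precisely the bookkeeping in the inductive step. Naively replacing $w$ by $w^{p^a}$ produces the hypothesis $g(1-(w')^{d'})\equiv p^a d'$ rather than $\equiv d'$, and one cannot divide by $p^a$ modulo $k$. Carrying the auxiliary multiplier $u$, coprime to the remaining order, avoids this. The only point needing verification is that this multiplier does not disturb the mod-$p$ reductions in Lemma~\ref{lem:DivisibilitySPF}.
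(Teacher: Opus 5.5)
Your argument is correct, and its skeleton matches the paper's: apply Lemma~\ref{lem:DivisibilitySPF} at the smallest prime $p$ of $c$, write $d=p^a d'$, pass to $w^{p^a}$ of order $m$, and repeat. Where you differ is in how you handle the stray factor $p^a$.

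The paper reduces the hypothesis modulo $m$, where $p^a$ is invertible. This gives $gp^{-a}(1-v^{e})\equiv e\pmod{m}$, which again has the shape of Lemma~\ref{lem:DivisibilitySPF}. The paper then iterates informally and assembles $c\mid d$ with the Chinese Remainder Theorem. So the obstacle you name (``one cannot divide by $p^a$ modulo $k$'') does not arise there. Only divisibility by prime powers dividing $m$ is still needed, and that information survives reduction mod $m$.

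You instead keep the modulus $k$, allow a multiplier $u$ prime to $c$ on the right-hand side, and run a genuine induction on the number of prime factors of $c$. This gives $c=p^a m\mid d$ directly.

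What your version buys is that the lemmas' hypotheses are met literally. After the paper's change of modulus, the order of $v$ modulo $m$ can be strictly smaller than $m$. For example, with $k=18$, $w=5$, $c=6$, one gets $v=7\equiv 1\pmod 3$. So the paper really uses Lemma~\ref{lem:DivisibilitySPF} through its proof, which needs only $v\equiv 1\pmod q$ and $q^b\mid m$, not through its statement. In your version $w^{p^a}$ has order exactly $m$ in $\Zk$ with $m\mid k$. Hence Lemma~\ref{lem:SmallestPrime} and your slightly generalized Lemma~\ref{lem:DivisibilitySPF} apply verbatim. The only cost is checking that $u$ does not disturb the base case or the inductive step, and you did that correctly.
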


\begin{proof}
Let $p$ be the smallest prime divisor of $c$, and let $c=p^am$ where $p\nmid m$.

By Lemma~\ref{lem:DivisibilitySPF}, $d\equiv0\pmod{p^a}$.  Let $d=p^a\cdot e$, and let $v=w^{p^a}$, so that $v$ has multiplicative order $m$ in $\Zk$.

The main hypothesis, reduced mod $m$, says that $g\left(1-w^d\right)\equiv d\pmod{m}$, which implies $g\left(1-v^e\right)\equiv d\pmod{m}$, which implies $gp^{-a}\left(1-v^e\right)\equiv e\pmod{m}$.

Now let $q$ be the smallest prime divisor of $m$, and say $m=q^b\cdot r$ where $q\nmid r$.  Lemma~\ref{lem:DivisibilitySPF} applied to the previous congruence implies that $e\equiv 0\pmod{q^b}$ hence also $d\equiv 0\pmod{q^b}$.

The preceding steps can be applied successively to all the prime divisors of $c$ in increasing order.  If the prime factorization of $c$ is $c=p_1^{a_1}\times\cdots p_r^{a_r}$, then we conclude $d\equiv 0\pmod{p_i^{a_i}}$ for every $i$.  By the Chinese Remainder Theorem, it follows that $d\equiv 0\pmod{c}$.  And since $c$ is the multiplicative order of $w$ in $\Zk$, it follows from the main hypothesis that $d\equiv 0\pmod{k}$.
\end{proof}

\begin{lemma}\label{lem:Bijectivity}
Suppose $w$ has multiplicative order $c$ in $\Zk$, and $c\divides k$.  Let $g$ be any element of $\Zk$.  The function $f:\Zk\to\Zk$ by $f\left(x\right)=g\cdot w^x+x$ is a bijection.
\end{lemma}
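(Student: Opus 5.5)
Since $\Zk$ is finite, it is enough to show that $f$ is injective. First we check that $f$ is well defined. The exponent $x$ is a residue class mod $k$, and $w^x$ depends only on $x \bmod c$. Because $c\divides k$, $w^x$ is well defined on $\Zk$, and so $f(x)=g\cdot w^x+x$ is a well-defined function $\Zk\to\Zk$.

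For injectivity, suppose $f(x)=f(y)$ for some $x,y\in\Zk$, and set $d=y-x$, so that $y=x+d$. The equation $g w^x+x = g w^{x+d}+x+d$ rearranges to
$$g w^x\left(1-w^d\right)\equiv d\pmod{k}.$$
Now put $g'=g w^x\in\Zk$. The congruence becomes $g'\left(1-w^d\right)\equiv d\pmod{k}$, which is exactly the hypothesis of Lemma~\ref{lem:DivisibilityFull} with $g'$ in place of $g$. That lemma allows an arbitrary element of $\Zk$ in this role, so it gives $d\equiv 0\pmod{k}$, that is, $x=y$. Hence $f$ is injective, and therefore bijective.

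The only point needing care is that $d$ appears both as an exponent and as an element of $\Zk$. Lemma~\ref{lem:DivisibilityFull} is stated in exactly this mixed sense, and the exponent is well defined because $c\divides k$. With that checked, nothing else is needed: all the number-theoretic work was done in Lemmas~\ref{lem:SmallestPrime}--\ref{lem:DivisibilityFull}, and the present proof is just the reduction to Lemma~\ref{lem:DivisibilityFull} with $g$ replaced by $g w^x$.
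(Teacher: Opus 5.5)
Your proof is correct and is essentially the paper's argument. Like the paper, you rearrange $f(x)=f(y)$ into $gw^x\left(1-w^{y-x}\right)\equiv y-x\pmod{k}$ and apply Lemma~\ref{lem:DivisibilityFull} with $gw^x$ in place of $g$; you also state explicitly that injective implies bijective on the finite set $\Zk$, which the paper leaves implicit.
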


\begin{proof}
Suppose $f(x)=f(y)$ for some $x$ and $y$ in $\Zk$.  That means
$$gw^x+x\equiv gw^y+y\pmod{k}$$
by definition of $f$, or equivalently,
$$gw^x\left(1-w^{y-x}\right)\equiv y-x\pmod{k}.$$
Lemma~\ref{lem:DivisibilityFull} applies and allows us to conclude $y-x\equiv0\pmod{k}$.
\end{proof}

Having established Lemmas~\ref{lem:SmallestPrime}--\ref{lem:Bijectivity}, it is easy to show that we have in fact defined a family of central digraphs, as asserted in Theorem~\ref{thm:CorrectConstruction}.

\begin{proof}[Proof of Theorem~\ref{thm:CorrectConstruction}]
Let $k$ be a positive integer, $w$ an element of multiplicative order $c$ in $\Zk$, with $c\divides k$, and let $G$ denote the graph $G(k,c,w)$ from Definition~\ref{def:GraphDef}.  Let $(p,i)$ and $(q,j)$ be any elements in $\Zk\times \Zk$, the vertex set of $G$.  Our goal is to show that there is a unique link from $(p,i)$ to $(q,j)$.

By definition of $G$, the outgoing edges from $(p,i)$ go to the vertices $\left(p+w^x i,x\right)$, where $x$ ranges over $\Zk$.  Similarly, the incoming edges into $(q,j)$ come from the vertices $\left(q-w^j x, x\right)$, where $x$ ranges over $\Zk$.

So, there is a unique link from $(p,i)$ to $(q,j)$ in $G$ if and only if there is a unique $x$ in $\Zk$ satisfying $p+w^x i=q-w^j x$ in $\Zk$.  Rearranging, there must be a unique $x$ satisfying
$$\left(i\cdot w^{-j}\right)w^x + x = (q-p)w^{-j},$$
but this is true by the bijectivity established in Lemma~\ref{lem:Bijectivity}, viewing the lefthand side as a function of $x$.
\end{proof}

Next, a simple counting method suffices to establish that different parameters give rise to non-isomorphic graphs.

\begin{proof}[Proof of Theorem~\ref{thm:DistinctGraphs}]
Let $G=G(k,c,w)$ be a \gname.  For a vertex $(p,i)$ in $G$, refer to the first component $p$ as the \textit{block number} of the vertex, and the second component $i$ as the \textit{index number}.  Recall that the idempotent vertices are those of the form $(p,0)$, so all the out-neighbors of an idempotent vertex have the same block number as the idempotent itself. We will refer to the set consisting of an idempotent together with all its out-neighbors as a \textit{block} of $G$.  Clearly, a graph isomorphism between two \gnames\ must carry idempotents to idempotents, hence also blocks to blocks.

Let the \textit{block count} of vertex $(p,i)$ be the number of distinct block numbers that occur among the out-neighbors of $(p,i)$.  Now, the out-neighbors of $(p,i)$ have block numbers of the form $p+w^ji$ where $j$ varies over $\Zk$.  That means the block count of $(p,i)$ is the number of elements in $i$'s cycle, when $w$ acts on $\Zk$ by multiplication.  Therefore, if we choose any block in $G$ and list the block counts of its vertices, we can determine the cycle type for multiplication by $w$.  The number of cycles of any given length $l$ is the number of times that $l$ appears in the list, divided by $l$.

Consider another \gname\ $G'$ which is isomorphic to $G$.  Since the isomorphism carries blocks to blocks, listing block counts for any block in $G'$ gives us the same list we found in $G$.  Hence, multiplication by the unit in $G'$ has the same cycle type as multiplication by $w$.  In particular, the two units have the same multiplicative order.
\end{proof}

\section{Automorphisms of \gnames}

\begin{proof}[Proof of Theorem~\ref{thm:AutosInSk}]
Let $G=G(k,c,w)$ be a \gname\ and $\phi:G\to G$ a graph automorphism.  As $\phi$ must permute the vertices of the form $(p,0)$ among themselves, let $\pi$ be the related permutation of $\Zk$ defined by $\pi(p)=q$ whenever $\phi(p,0)=(q,0)$.  

Now, every vertex $(p,i$) in $G$ happens to be the unique link from one idempotent to another; namely, $G$ has edges $(p,0)\to(p,i)$ and $(p,i)\to(p+i,0)$.  Thus, $\phi(p,i)$ must be the unique link between $\phi(p,0)$ and $\phi(p+i,0)$, so $\phi$ is entirely determined by its action on the idempotents.  In fact, we can express $\phi$ explicitly in terms of the permutation $\pi$, as $$\phi(p,i)=\left(\pi(p),\pi(p+i)-\pi(p)\right).$$

In the situation above, let us say that $\pi$ is the idempotent permutation determined by the automorphism $\phi$, and that $\phi$ is the automorphism induced by permutation $\pi$.  Since $\phi$ is fully determined by its idempotent permutation, the mapping sending each automorphism in $\Aut(G)$ to its corresponding idempotent permutation is injective.

Moreover, suppose $\phi_1$ and $\phi_2$ are two automorphisms of $G$, with $\pi_1$ and $\pi_2$ as their respective idempotent permutations.  Then $\phi_1\circ\phi_2(p,0) = \phi_1\left(\pi_2(p),0\right) = \left(\pi_1\circ\pi_2(p),0\right)$, so the mapping sending each automorphism in $\Aut(G)$ to its corresponding idempotent permutation is a group homomorphism from $\Aut(G)$ to the symmetric group on the $k$ elements of $\Zk$.  Thus, $\Aut(G)$ is isomorphic to a subgroup of $S_k$.
\end{proof}

In the preceding proof, we use the observation that every vertex in $G(k,c,w)$ is the unique link between a pair of idempotents.  This is a special kind of regularity which does not hold in central digraphs in general.  Example $A_4$ in \S7 of Knuth~\cite{knuth1970notes} has a vertex with no idempotents among its out-neighbors, and example $A_5$ has two such vertices.  Both graphs have a trivial automorphism group.

Automorphisms of the graphs with multiplicative order $1$ are easy to understand.

\begin{proof}[Proof of Theorem~\ref{thm:AutoGroup1}]
Let $G=G(k,1,1)$ and let $\pi$ be any permutation of $\Zk$.  We must show that the induced function
$$\phi(p,i) = \left(\pi(p),\pi(p+i)-\pi(p)\right)$$
on the vertices of $G$ is a graph automorphism.  First of all, it is easy to check that the function
$\lambda(p,i) = \left(\pi^{-1}(p),\pi^{-1}(p+i)-\pi^{-1}(p)\right)$
is an inverse for $\phi$, which is therefore a bijection.

Since the unit in $G$ is $w=1$, every edge has the form $(p,i)\to(p+i,j)$.  Applying $\phi$ to the endpoints of the edge, 
$$\phi(p,i) = (\pi(p),\pi(p+i)-\pi(p))$$
and
$$\phi(p+i,j) = \left(\pi(p+i),\pi(p+i+j)-\pi(p+i)\right),$$
and of course $\pi(p)+\left[\pi(p+i)-\pi(p)\right]=\pi(p+i)$, so $\phi(p,i)\to\phi(p+i,j)$ is an edge in $G$.  This shows that every permutation of $\Zk$ induces an automorphism of $G$ and so $\Aut(G)$ is isomorphic to (all of) $S_k$.
\end{proof}

For the remainder of the article, we will use similar (but more complicated) arguments to derive conditions which ensure that a permutation $\pi:\Zk\to\Zk$ induces a graph automorphism of $G(k,c,w)$, for higher values of $c$.  Suppose $\pi$ is such a permutation, and consider the induced function 
$$\phi(p,i) = \left(\pi(p),\pi(p+i)-\pi(p)\right)$$
on the vertices of $G$.
The edges in $G(k,c,w)$ all have the form
$$\left(a,i\right)\to \left(a+w^ji,j\right).$$
Applying $\phi$ to the endpoints of the edge,
$$\phi(a,i) = \left(\pi(a),\pi(a+i)-\pi(a)\right)$$
and
$$\phi\left(a+w^ji,j\right) = \left(\pi\left(a+w^ji\right),\pi\left(a+w^ji+j\right)-\pi\left(a+w^ji\right)\right).$$
Thus, the function $\phi$ induced by permutation $\pi$ is a graph isomorphism if and only if
$$\pi\left(a+w^ji\right)=\pi(a)+w^{\pi\left(a+w^ji+j\right)-\pi\left(a+w^ji\right)}\left[\pi\left(a+i\right)-\pi(a)\right]$$
for every $a, i,$ and $j$ in $\Zk$.  We will refer to the equation above as equation $\Iso(a,i,j)$.

We will begin by showing that every $G(k,c,w)$ has a family of automorphisms which act as affine transformations on the idempotents.

\begin{proposition}\label{prop:AffineAutomorphisms}
Let $G=G(k,c,w)$, and let $a,b\in \Zk$.  The function $\pi:\Zk\to\Zk$ by $\pi(x)=mx+b$ induces an automorphism of $G$ if and only if (1) $m$ is a unit (mod $k$), and (2) $m\equiv 1\pmod{c}$.  
\end{proposition}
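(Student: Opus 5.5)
We will work directly with the criterion $\Iso(a,i,j)$ derived just above, substituting $\pi(x)=mx+b$. Since $\pi$ is affine, all differences simplify: $\pi(a+i)-\pi(a)=mi$, $\pi(a+w^ji)-\pi(a)=mw^ji$, and $\pi(a+w^ji+j)-\pi(a+w^ji)=mj$. Hence $\Iso(a,i,j)$ becomes
$$m w^j i = w^{mj}\, m i \quad\text{in } \Zk,$$
for all $i,j\in\Zk$, and the additive constant $b$ drops out entirely. Note that $w^{mj}$ is well defined because $c\divides k$. There is one preliminary point: the derivation of $\Iso(a,i,j)$ presupposes that $\pi$ is a permutation, since otherwise $\phi$ is not even a bijection. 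So the first step is to record that $x\mapsto mx+b$ is a permutation of $\Zk$ if and only if $m$ is a unit. This gives condition (1) as necessary, and under (1) the remaining question is exactly whether the displayed identity holds.

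For sufficiency, assume $m$ is a unit with $m\equiv 1\pmod c$. Then $mj\equiv j\pmod c$ for every integer representative $j$, so $w^{mj}=w^j$ because $w$ has order $c$. The identity then holds trivially, and by the discussion preceding the proposition $\phi$ is an automorphism.

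For necessity, assume (1) and that the identity holds for all $i,j$. Take $i=1$ and cancel the unit $m$ to obtain $w^j=w^{mj}$, that is, $w^{(m-1)j}=1$ for all $j$. Setting $j=1$ gives $w^{m-1}=1$, so $c\divides m-1$, which is condition (2).

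I expect the only delicate point to be justifying that necessity of (1) is legitimate within the ``if and only if'' framework. The cleanest route is to observe that if $m$ is not a unit, then $\pi$ is not injective. In that case $\phi$ is not injective on the idempotents $(x,0)\mapsto(\pi(x),0)$, so it cannot be an automorphism. Theorem~\ref{thm:AutosInSk} then shows that every automorphism arises from a permutation, so the ``induces'' language only makes sense for units. Everything else is the short computation above.
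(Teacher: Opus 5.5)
Your proof is correct and is essentially the paper's argument: substituting $\pi(x)=mx+b$ reduces $\Iso(a,i,j)$ to $w^{j}i=w^{mj}i$ after cancelling the unit $m$, necessity of (2) follows by specializing to $i=j=1$ to get $w^{m}=w$, and sufficiency follows from $w^{mj}=w^{j}$ when $m\equiv 1\pmod{c}$. Your remark that a non-unit $m$ makes $\phi$ non-injective on the idempotents just spells out the paper's one-line observation that $\pi$ is a bijection exactly when $m$ is a unit.
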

Note that the congruence hypothesis makes sense, since $c|k$.
\begin{proof}
Clearly $\pi$ is a bijection if and only if $m$ is a unit, so assume $m$ is a unit.  Using the definition $\pi(x)=mx+b$, the isomorphism equation $\Iso(a,i,j)$ reduces to just
$$w^ji = w^{mj}i,$$
for all $i$ and $j$ in $\Zk$.  

If $\pi$ induces a graph automorphism, then the equation is true for $i=j=1$ in particular, which means $w = w^m$ in $\Zk$, so $m\equiv 1\pmod{c}$ since $c$ is the multiplicative order of $w$.

On the other hand, if $m\equiv 1\pmod{c}$, then $w^m = w$ in $\Zk$, so the equation is satisfied for all $i$ and $j$, and $\pi$ induces a graph automorphism.
\end{proof}

There are $\eulerphi(k)/\eulerphi(c)$ units (mod $k$) which are congruent to $1$ (mod $c$), so the preceding proposition gives a subgroup of size $k\cdot\eulerphi(k)/\eulerphi(c)$ within $\Aut(G)$.  

\section{Periodicity and Truncation}
Here, we will establish the existence of a nontrivial homomorphism from $\Aut\left(G\left(k,c,w\right)\right)$ to the symmetric group $S_c$, which we will call the \textit{truncation} homomorphism.  The kernel of this homomorphism gives us an interesting normal subgroup.

\begin{proposition}\label{prop:Periodicity}
Let $G=G(k,c,w)$ and suppose $\pi$ is a permutation of $\Zk$ which induces an automorphism of $G$.  Then $\pi(i+c)\equiv \pi(i)\pmod{c}$ for every $i$ in $\Zk$.
\end{proposition}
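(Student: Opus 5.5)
The plan is to compare the isomorphism equations $\Iso(a,i,j)$ and $\Iso(a,i,j+c)$. Since $w^{j+c}=w^j$ in $\Zk$, these two equations have the same left-hand side $\pi(a+w^ji)$, and the same base point $b=a+w^ji$ appears inside the exponent. Equating the two right-hand sides will give
$$w^{\pi(b+j)-\pi(b)}\bigl[\pi(a+i)-\pi(a)\bigr]=w^{\pi(b+j+c)-\pi(b)}\bigl[\pi(a+i)-\pi(a)\bigr].$$
If the bracketed factor is a unit, we can cancel it. That leaves $w^{\pi(b+j+c)-\pi(b+j)}=1$, so $\pi(b+j+c)\equiv\pi(b+j)\pmod{c}$ because $c$ is the multiplicative order of $w$. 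This is exactly the desired periodicity, evaluated at the point $b+j$.

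The obstacle is that $\pi(a+i)-\pi(a)$ need not be a unit for arbitrary $a,i$. I would remove it by choosing $i$ so that this difference equals $1$. Fix $a$ (say $a=0$). Since $\pi$ is a bijection of $\Zk$, there is a unique $i$ with $\pi(a+i)=\pi(a)+1$, namely $i=\pi^{-1}(\pi(a)+1)-a$. In graph terms, this chooses the vertex $(a,i)$ that $\phi$ sends to $(\pi(a),1)$. With this choice the displayed equation becomes $w^{\pi(b+j)-\pi(b)}=w^{\pi(b+j+c)-\pi(b)}$ for every $j$, and the conclusion above holds at every point of the form $b+j=a+w^ji+j$.

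It remains to check that these points exhaust $\Zk$ as $j$ ranges over $\Zk$. With $a$ and $i$ fixed, the map $j\mapsto i\cdot w^j+j$ is a bijection of $\Zk$ by Lemma~\ref{lem:Bijectivity} (taking $g=i$), so $j\mapsto a+iw^j+j$ is also surjective. Hence every $x\in\Zk$ can be written as $x=b+j$ for a suitable $j$, and so $\pi(x+c)\equiv\pi(x)\pmod{c}$ for all $x$.

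I expect the only real subtlety to be the unit-cancellation issue, which the choice of $i$ resolves. One must also note that reducing $\pi(\cdot)$ modulo $c$ is well defined, which holds because $c\divides k$.
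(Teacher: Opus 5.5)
Your argument is correct, but it takes a more roundabout route than the paper. The paper uses the single instance $\Iso(p,i-p,c)$. Since $w^c=1$, both the left-hand side and the bracket collapse onto the point $i$, giving
\[
\pi(i)=\pi(p)+w^{\pi(i+c)-\pi(i)}\bigl[\pi(i)-\pi(p)\bigr].
\]
Choosing $p$ with $\pi(i)-\pi(p)=1$ then yields $w^{\pi(i+c)-\pi(i)}=1$ directly at the arbitrary point $i$.

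In your terms, this is your comparison of $j$ with $j+c$ carried out at $j=0$, where $\Iso(a,i,0)$ is a tautology. The difference is that the target point $x=a+i$ is fixed first and $a$ is then chosen to make the bracket equal to $1$, so every point of $\Zk$ is reached with no further work.

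You instead fix $a$ and $i$ and let $j$ vary, so the points you reach are $a+iw^j+j$. You therefore need Lemma~\ref{lem:Bijectivity} (the number-theoretic core of Theorem~\ref{thm:CorrectConstruction}) to know that these points exhaust $\Zk$. That step is legitimate, since the lemma holds for every $g\in\Zk$, including your $i$. However, it makes what is really a formal consequence of the isomorphism equations depend on the divisibility lemmas, a dependency the paper's choice of parameters avoids.
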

\begin{proof}
Let $i\in \Zk$.  For any $p\in \Zk$, isomorphism equation $\Iso(p,i-p,c)$ says that
$$\pi(i)=\pi(p)+w^{\pi(i+c)-\pi(i)}\left[\pi(i)-\pi(p)\right].$$
Since $\pi$ is a permutation of $\Zk$, there is a particular $p\in \Zk$ for which $\pi(i)-\pi(p)=1$. Evaluating the above for that value of $p$ gives,
$$1 = w^{\pi(i+c)-\pi(i)},$$
which implies $\pi(i+c)\equiv\pi(i)\pmod{c}$, since $c$ is the multiplicative order of $w$.
\end{proof}

Proposition~\ref{prop:Periodicity} says that permutations $\pi$ which induce an automorphism of $G(k,c,w)$ are ``periodic, with period $c$, when considered mod $c$.''  This means that $\pi$ is a lifting of some permutation on $\Zmod{c}$ which we will call the \textit{truncation} of $\pi$, denoted $T(\pi)$.

\begin{example}
Consider $G=G(9,3,4)$ and the following permutation of $\Zmod9$, expressed in tabular form:
$$\pi=\begin{pmatrix}
0 & 1 & 2 & 3 & 4 & 5 & 6 & 7 & 8 \\
4 & 5 & 3 & 7 & 8 & 6 & 1 & 2 & 0 \\
\end{pmatrix}.$$
This $\pi$ exhibits the periodicity described in Proposition~\ref{prop:Periodicity} (and does, in fact, induce a permutation of $G$, although we do not need to verify that).  Reducing all the entries mod $3$, the truncation $T(\pi)$ acts as a permutation of $\Zmod3$, namely,
$$T(\pi)=\begin{pmatrix}
0 & 1 & 2 & 0 & 1 & 2 & 0 & 1 & 2 \\
1 & 2 & 0 & 1 & 2 & 0 & 1 & 2 & 0 \\
\end{pmatrix}$$
or, more normally written,
$$T(\pi)=\begin{pmatrix}
0 & 1 & 2 \\
1 & 2 & 0 \\
\end{pmatrix}.
$$
\end{example}

The operation of truncation commutes with composition, so $T$ defines a homomorphism from $\Aut\left(G(k,c,w)\right)$ to the symmetric group $S_c$.  By Proposition~\ref{prop:AffineAutomorphisms}, the function $\pi:\Zk\to\Zk$ by $\pi(x)=x+1$ induces an automorphism of $G$, and $T(\pi)$ is a $c$-cycle in $S_c$.  This shows that $T: \Aut\left(G(k,c,w)\right)\to S_c$ is a nontrivial homomorphism, when $c>1$.   

\section{Automorphisms of graphs with unit $w=-1$}

\begin{proof}[Proof of Theorem~\ref{thm:AutoGroup2}]  Let $k$ be a positive, even integer, and let $\pi:\Zk\to\Zk$ be an idempotent mapping which induces an automorphism of $G(k,2,-1)$.  We aim to show that $\pi$ must be one of the affine functions described in Proposition~\ref{prop:AffineAutomorphisms}.  Let $m=\pi(1)-\pi(0)$ and let $b=\pi(0)$.  Note that $\pi(0)=0m+b$ and $\pi(1)=1m+b$.

We know that $\pi$ satisfies the general isomorphism equation (using $w=-1$),
$$\pi\left(a+\left(-1\right)^ji\right)=\pi(a)+\left(-1\right)^{\pi\left(a+\left(-1\right)^ji+j\right)-\pi\left(a+\left(-1\right)^ji\right)}\left[\pi\left(a+i\right)-\pi(a)\right]$$
for all $a, i,$ and $j$ in $\Zk$.  Note, though, that by Periodicity (Proposition~\ref{prop:Periodicity}), the parity of the exponent
$$ {\pi\left(a+\left(-1\right)^ji+j\right)-\pi\left(a+\left(-1\right)^ji\right)} $$
is simply the parity of $j$.  Therefore, the general equation is equivalent to the much simpler form
$$\pi\left(a+\left(-1\right)^ji\right)=\pi(a)+\left(-1\right)^{j}\left[\pi\left(a+i\right)-\pi(a)\right]$$
for all $a, i,$ and $j$ in $\Zk$.  Specializing to $j=1$ and $i=1$, it follows that
$$\pi(a+1) = \pi(a) + \left(\pi(a)-\pi(a-1)\right)$$
for all $a$ in $\Zk$.
Applying this to $a=1$,
$$\pi(2) = m+b+m = 2m+b,$$
then applying it to $a=2$,
$$\pi(3) = 2m+b + m = 3m+b,$$
and so on, showing that $\pi(x)=mx+b$ for each $x\in\Zk$.  Thus the full automorphism group of $G(k,2,-1)$ is isomorphic to the group of affine functions from $\Zk$ to itself.
\end{proof}

\section{The kernel of truncation for graphs of prime-squared size}

Throughout this section, let $p$ be a prime, and let $G=G(p^2,p,1+p)$.  That is, $G$ has $p^4$ vertices and has multiplicative order $p$.  Let $w=1+p$.  It follows from the binomial theorem that $w^x = 1+px$ in $\Zpp$, and in particular, $w$ does have multiplicative order $p$ in $\Zpp$.

\begin{proposition}\label{prop:Kernel1}
Suppose $\pi:\Zpp\to\Zpp$ is a permutation satisfying
\begin{enumerate}
\item $\pi(j)\equiv j\pmod{p}$ for all $j\in\Zpp$, and
\item $\pi(a+p)\equiv \pi(a)+p\pmod{p^2}$ for all $a$.
\end{enumerate}
Then $\pi$ induces an automorphism of $G$.
\end{proposition}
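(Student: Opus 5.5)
The plan is to verify the isomorphism equation $\Iso(a,i,j)$ directly for every $a,i,j\in\Zpp$. By the discussion preceding Proposition~\ref{prop:AffineAutomorphisms}, this is exactly the condition for the induced map $\phi(p,i)=(\pi(p),\pi(p+i)-\pi(p))$ to be an automorphism. Since $\pi$ is assumed to be a permutation, $\phi$ is a bijection: as in the proof of Theorem~\ref{thm:AutoGroup1}, the map induced by $\pi^{-1}$ is its inverse. So only the edge condition needs checking.

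First I would repackage the two hypotheses. Condition (1) says $\pi(x)-x\equiv 0\pmod p$. Condition (2) says $\pi(x)-x$ is periodic with period $p$. Together they let me write
$$\pi(x)=x+p\,h(x)\quad\text{in }\Zpp,$$
where $h(x)\in\Zmod{p}$ depends only on $x \bmod p$. The product $p\,h(x)$ is then well-defined mod $p^2$.

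Next I would simplify the exponent. We have $w^y=1+py$ in $\Zpp$, so $w^y$ depends only on $y\bmod p$. By (1), the exponent $\pi(u+j)-\pi(u)$ with $u=a+w^ji$ is congruent to $j$ mod $p$. Hence the factor $w^{\pi(u+j)-\pi(u)}$ equals $w^j=1+pj$. This is the same simplification that Periodicity gave in the $w=-1$ case.

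Then both sides of $\Iso(a,i,j)$ can be computed mod $p^2$. For the right side, $\pi(a+i)-\pi(a)=i+p\bigl(h(a+i)-h(a)\bigr)$. Multiplying by $1+pj$ and discarding multiples of $p^2$ gives $i+pij+p\bigl(h(a+i)-h(a)\bigr)$. Adding $\pi(a)=a+p\,h(a)$ yields
$$a+i+pij+p\,h(a+i).$$
For the left side, $a+w^ji=a+i+pij$, so
$$\pi(a+i+pij)=a+i+pij+p\,h(a+i+pij)=a+i+pij+p\,h(a+i),$$
since $h$ only sees residues mod $p$. The two sides agree, which proves the proposition.

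I expect no serious obstacle. The only points needing care are that $w^{(\cdot)}$ and $p\,h(\cdot)$ are well-defined on the relevant residues, and that the cross term $p\bigl(h(a+i)-h(a)\bigr)\cdot pj$ vanishes mod $p^2$.
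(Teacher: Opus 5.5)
Your proof is correct and is essentially the paper's argument. Both verify $\Iso(a,i,j)$ directly: hypothesis (1) reduces the exponential factor to $w^j=1+jp$ and gives $\pi(a+i)-\pi(a)\equiv i\pmod p$, and hypothesis (2) gives $\pi(a+i+ijp)=\pi(a+i)+ijp$. Your decomposition $\pi(x)=x+p\,h(x)$, with $h$ depending only on $x \bmod p$, is just a tidy packaging of these two facts, and your explicit remark that $\phi$ is bijective (with inverse induced by $\pi^{-1}$) fills in a point the paper leaves implicit.
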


\begin{proof}
We need to show that the general isomorphism equation 
$$\pi\left(a+w^ji\right)=\pi(a)+w^{\pi\left(a+w^ji+j\right)-\pi\left(a+w^ji\right)}\left[\pi\left(a+i\right)-\pi(a)\right]$$
is satisfied for all $a,i,$ and $j$ in $\Zpp$.  Note that, for any $x$ in $\Zpp$, $w^{\pi(x)} = w^x$ by hypothesis (1).
Evaluating the lefthand side first,
\begin{align*}
\pi\left(a + w^j i\right)
  &= \pi\left(a + i + i j p\right)
     &&\text{since } w^j = 1 + j p,\\
  &= \pi(a + i) + i j p
     &&\text{by hypothesis (2),}
\end{align*}
with the $=$ sign denoting equality in $\Zpp$.

On the righthand side, the exponential expression simplifies as follows:
\begin{align*}
w^{\pi\left(a+w^ji+j\right)-\pi\left(a+w^ji\right)} 
    &= w^{\pi\left(a+w^ji+j\right)}\cdot w^{-\pi\left(a+w^ji\right)} \\
    &= w^{(a+i+j+ijp)}\cdot w^{-(a+i+ijp)} \\
    &= \left(1+ap+ip+jp\right)\left(1-ap-ip\right) \\
    &= 1+jp
\end{align*}
Also, $\pi(a+i)-\pi(a) = \pi(i) + lp$ for some integer $l$, by hypothesis (1).  So the entire righthand side of the isomorphism equation simplifies as
\begin{align*}
\pi(a)+\left(1+jp\right)\left[\pi(a+i) - \pi(a)\right] 
    &= \pi\left(a+i\right)+jp\left[\pi(a+i)-\pi(a)\right] \\
    &= \pi\left(a+i\right)+jp\left[i+lp\right] \\
    &= \pi\left(a+i\right)+ijp.
\end{align*}
This agrees with the lefthand side for all $a,i,$ and $j$.
\end{proof}

Note that hypothesis (1) precisely characterizes permutations in the kernel of the truncation homomorphism $T$ in this case.  The following proposition shows that permutations in the kernel of $T$ also satisfy hypothesis (2).

\begin{proposition}\label{prop:Kernel2}
Let $\pi$ be a permutation of $\Zpp$ which induces an automorphism of $G$.  If $\pi\in \ker T$ then $\pi(a+p) = \pi(a)+p$ for all $a$ in $\Zpp$.
\end{proposition}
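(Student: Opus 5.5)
The plan is to substitute the kernel condition directly into the isomorphism equation $\Iso(a,i,j)$ and then specialize to $i=j=1$. The identity $w^x = 1+px$ in $\Zpp$ shows that $w^x$ depends only on $x \bmod p$. Since $\pi\in\ker T$ means exactly $\pi(x)\equiv x\pmod p$ for all $x$, we get $w^{\pi(x)} = w^{x}$ for every $x\in\Zpp$. This is the same simplification used in the proof of Proposition~\ref{prop:Kernel1}, but there it came from a hypothesis and here it comes from membership in $\ker T$.

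First I will simplify the exponential factor on the right-hand side of $\Iso(a,i,j)$. Using $w^{\pi(x)}=w^x$ and $w^{x+y}=w^xw^y$,
$$w^{\pi\left(a+w^ji+j\right)-\pi\left(a+w^ji\right)} = w^{\left(a+w^ji+j\right)-\left(a+w^ji\right)} = w^j = 1+jp.$$
The left-hand side is $\pi(a+w^ji)=\pi(a+i+ijp)$. So, because $\pi$ induces an automorphism, $\Iso(a,i,j)$ becomes
$$\pi(a+i+ijp) = \pi(a)+(1+jp)\left[\pi(a+i)-\pi(a)\right] = \pi(a+i) + jp\left[\pi(a+i)-\pi(a)\right].$$

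Next I will use the kernel condition once more. It gives $\pi(a+i)-\pi(a)\equiv i\pmod p$, so $jp\left[\pi(a+i)-\pi(a)\right] = ijp$ in $\Zpp$. This yields $\pi(a+i+ijp)=\pi(a+i)+ijp$ for all $a,i,j$. Taking $i=j=1$ gives $\pi(a+1+p)=\pi(a+1)+p$ for every $a$. Replacing $a$ by $a-1$ gives the claim $\pi(a+p)=\pi(a)+p$.

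I do not expect a serious obstacle. The only point needing care is the first step: one must check that $w^{\pi(x)}=w^x$ is legitimate, i.e.\ that exponents of $w$ are read modulo $p$ (the order of $w$), so that the congruence $\pi(x)\equiv x \pmod p$ suffices. With that in hand, the computation mirrors the proof of Proposition~\ref{prop:Kernel1} run in reverse.
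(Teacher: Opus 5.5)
Your proposal is correct and takes essentially the same approach as the paper. The paper applies $\Iso(a-1,1,1)$ directly and uses $\pi\in\ker T$ to reduce the exponential factor to $1+p$ and the correction term $p\left[\pi(a)-\pi(a-1)\right]$ to $p$; you do the same computation for general $(a,i,j)$ first and only then set $i=j=1$ and shift $a\mapsto a-1$.
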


\begin{proof}
Since $\pi$ induces an automorphism, $\pi$ satisfies every instance of the isomorphism equation.  In particular, equation $\Iso(a-1,1,1)$ says that 
\begin{align*}
\pi(a+p) 
    &= \pi(a-1) + w^{\pi(a+p+1)-\pi(a+p)}\left[\pi(a)-\pi(a-1)\right] \\
    &= \pi(a-1) + (1+ap+p)(1-ap)\left[\pi(a)-\pi(a-1)\right] \\
    &= \pi(a-1) + \pi(a) -\pi(a-1) +p\pi(a) -p\pi(a-1) \\
    &= \pi(a) + pa - p(a-1) 
	&\text{since }\pi\in\ker T\\
    &= \pi(a)+p
\end{align*}
in $\Zpp$, as claimed.
\end{proof}

\begin{proposition}\label{prop:Kernel3}
For $G=G(p^2,p,1+p)$, the kernel of the truncation homomorphism $T$ is abelian.
\end{proposition}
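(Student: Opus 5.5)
By Proposition~\ref{prop:Kernel1} and Proposition~\ref{prop:Kernel2}, the kernel of $T$ is exactly the set of permutations $\pi$ of $\Zpp$ satisfying two conditions: $\pi(j)\equiv j\pmod p$ for all $j$, and $\pi(a+p)=\pi(a)+p$ for all $a$. The plan is to write every such $\pi$ in a normal form and then check directly that composition is commutative.

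\textbf{Step 1: normal form.} Take $\pi\in\ker T$. For each residue $r\in\{0,1,\dots,p-1\}$, the first condition lets us write $\pi(r)=r+p\,t_r$ for some $t_r\in\Zmod{p}$. Iterating the second condition then gives
$$\pi(r+sp)=r+sp+p\,t_r \quad\text{for all } s.$$
Equivalently, $\pi(x)=x+p\,t(x)$, where $t:\Zmod{p}\to\Zmod{p}$ depends only on $x\bmod p$. Conversely, every function $t$ gives a bijection of this form: on each coset $r+p\Zpp$ the map is just a translation by $p\,t_r$, and these cosets are preserved. By Proposition~\ref{prop:Kernel1}, each such map is therefore in the kernel.

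\textbf{Step 2: composition.} Let $\pi_1(x)=x+p\,t_1(x)$ and $\pi_2(x)=x+p\,t_2(x)$. Since $\pi_2(x)\equiv x\pmod p$, we have $t_1(\pi_2(x))=t_1(x)$, and so
$$\pi_1\circ\pi_2(x)=x+p\bigl(t_1(x)+t_2(x)\bigr).$$
This expression is symmetric in $t_1$ and $t_2$, so $\pi_1\circ\pi_2=\pi_2\circ\pi_1$.

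The kernel is therefore abelian. Step 2 shows more: $\pi\mapsto t$ is an isomorphism from the kernel onto $(\Zmod p)^p$. The kernel is thus elementary abelian of order $p^p$, which is the normal subgroup claimed in Theorem~\ref{thm:AutoGroup3}.

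The only delicate point is the identity $t_1(\pi_2(x))=t_1(x)$, which holds because the correction $t$ depends only on $x$ modulo $p$. Everything else is bookkeeping.
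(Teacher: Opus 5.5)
Your proof is correct and is essentially the paper's argument: both rest on Proposition~\ref{prop:Kernel2} together with the fact that kernel elements move each point by a multiple of $p$, which makes $\pi_1\circ\pi_2(x)=x+p\bigl(t_1(x)+t_2(x)\bigr)$ symmetric in the two factors. Your normal form, combined with Proposition~\ref{prop:Kernel1}, also yields the isomorphism $\ker T\cong(\Zmod{p})^p$, which the paper proves separately in Proposition~\ref{prop:KernelOfT}.
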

\begin{proof}
Let $\alpha, \beta$ be permutations of $\Zpp$ which belong to $\ker T$, and let $x\in\Zpp$.
Then $\beta(x) \equiv x\pmod{p}$ since $\beta\in \ker T$, so $\beta(x)=x+lp$ for some integer $l$.
Similarly, $\alpha(x)=x+rp$ for some integer $r$.  Thus,
\begin{align*}
\alpha\circ\beta(x) &= \alpha(x+lp) \\
    &= \alpha(x) + lp &\text{by Proposition~\ref{prop:Kernel2}} \\
    &= x + rp + lp,
\end{align*}
and
\begin{align*}
\beta\circ\alpha(x) &= \beta(x+rp) \\
    &= \beta(x) + rp &\text{by Proposition~\ref{prop:Kernel2}} \\
    &= x + lp + rp.
\end{align*}
This shows $\alpha\circ\beta = \beta\circ\alpha$, as claimed.
\end{proof}

\begin{proposition}\label{prop:KernelOfT}
Let $G=G\left(p^2,p,1+p\right)$, where $p$ is prime.  The kernel of the truncation homomorphism $T:\Aut(G)\to S_p$ is isomorphic to $\left(\Zmod{p}\right)^p$, the elementary abelian $p$-group of order $p^p$.
\end{proposition}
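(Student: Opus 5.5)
By Propositions~\ref{prop:Kernel1} and \ref{prop:Kernel2}, the kernel of $T$ consists exactly of the automorphisms induced by permutations $\pi$ of $\Zpp$ satisfying two conditions:
\begin{enumerate}
\item $\pi(x)\equiv x\pmod p$ for all $x$;
\item $\pi(a+p)=\pi(a)+p$ for all $a$.
\end{enumerate}
(By Theorem~\ref{thm:AutosInSk} we may identify automorphisms with their idempotent permutations.) The plan is to parametrize this set of permutations explicitly and to read off the group structure from the parametrization.

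Given such a $\pi$, define $t_r\in\Zmod p$ for each residue $r\in\{0,1,\dots,p-1\}$ by $\pi(r)=r+t_rp$; this is well defined by condition (1). Condition (2) then forces $\pi(r+sp)=r+(t_r+s)p$ for all $s$. So $\pi$ is determined by the vector $(t_0,\dots,t_{p-1})\in(\Zmod p)^p$.

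Conversely, take any vector $(t_r)$ and define $\pi(r+sp)=r+(t_r+s)p$. This map is well defined on $\Zpp$ because writing $x=r+sp$ with $0\le r<p$ and $s\in\Zmod p$ is unique. It is a bijection, since on each residue class mod $p$ it acts as a translation by $t_rp$. It also visibly satisfies conditions (1) and (2). By Proposition~\ref{prop:Kernel1} it therefore induces an automorphism, and it lies in $\ker T$ by condition (1). Hence the correspondence $\pi\leftrightarrow(t_r)$ is a bijection between $\ker T$ and $(\Zmod p)^p$, and in particular $|\ker T|=p^p$.

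It remains to check that this bijection is a group isomorphism. Suppose $\alpha$ corresponds to $(t_r)$ and $\beta$ to $(u_r)$. The computation in the proof of Proposition~\ref{prop:Kernel3} gives
\[
\alpha\circ\beta(r)=\alpha(r+u_rp)=\alpha(r)+u_rp=r+(t_r+u_r)p,
\]
so composition corresponds to addition of vectors. Thus $\ker T\isomorphic(\Zmod p)^p$, which is elementary abelian of order $p^p$.

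There is no serious obstacle here. The only step needing care is to verify well-definedness and bijectivity of the constructed $\pi$ for an arbitrary vector, so that Proposition~\ref{prop:Kernel1} can be applied; everything else follows directly from the earlier propositions.
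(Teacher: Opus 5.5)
Your proof is correct and takes essentially the paper's route. Propositions~\ref{prop:Kernel1} and \ref{prop:Kernel2} parametrize $\ker T$ by $\pi(0),\dots,\pi(p-1)$, giving $|\ker T|=p^p$. The only variation is at the end: you check directly that composition corresponds to addition in $(\Zmod{p})^p$, which gives an explicit isomorphism and also verifies, as the paper does not, that every choice of values yields a genuine permutation. The paper instead combines Proposition~\ref{prop:Kernel3} with the observation that $\pi^p$ is the identity.
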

\begin{proof}
If $\pi\in\ker T$, then $\pi$ is totally determined by the values of $\pi(0),\ldots,\pi(p-1)$, by Proposition~\ref{prop:Kernel2}.  Moreover, $\pi\in\ker T$ means that $\pi(j)\equiv j\pmod{p}$ for every $j$, so there are at most $p$ possibilities for $\pi(j)$, for each $j\in\left\{0,\ldots,p-1\right\}$.  By Proposition~\ref{prop:Kernel1}, every possible choice of those values corresponds to an actual automorphism of $G$.  Thus, $\ker T$ has $p^p$ elements.

Finally, by Proposition~\ref{prop:Kernel2}, $\pi\in \ker T$ implies $\pi^p(a) = a+p\cdot p$, which equals $a$ in $\Zpp$, so $\pi^p$ is the identity for every $\pi\in \ker T$.  By Proposition~\ref{prop:Kernel3}, $\ker T$ is abelian, so in fact $\ker T$ is the elementary abelian $p$-group of order $p^p$.
\end{proof}

\section{The image of truncation for graphs of prime-squared size}
Throughout this section, again, let $p$ be a prime, and let $G=G(p^2,p,1+p)$.  To complete the analysis of the automorphism group of $G$, we will study the image of the truncation homomorphism $T:\Aut(G)\to S_p$.

\begin{proposition}\label{prop:TwoValues}
Let $\pi$ be a permutation of $\Zmod{p^2}$ which induces an automorphism of $G$.  The $\pi(a)$ is determined (mod $p$), for all $a\in \Zmod{p^2}$, by the values of $\pi(0)$ and $\pi(1)$ (mod $p$).
\end{proposition}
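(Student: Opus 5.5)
The plan is to extract, from the isomorphism equations for $G=G(p^2,p,1+p)$ read modulo $p^2$, a linear recurrence modulo $p$ for $\pi$. Throughout I use $w^x=1+px$ in $\Zpp$. Periodicity (Proposition~\ref{prop:Periodicity}) with $c=p$ says that $\pi(x+p)\equiv\pi(x)\pmod p$. Consequently any quantity of the form $p\cdot(\text{expression in values of }\pi)$ is well defined modulo $p^2$ as soon as that expression is known modulo $p$, and it depends only on the arguments modulo $p$.

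First, I would specialize $\Iso(y-1,1,j)$. Its left side is $\pi(y+jp)$, since $w^j=1+jp$. On the right, the exponent $\pi(y+jp+j)-\pi(y+jp)$ is congruent modulo $p$ to $\pi(y+j)-\pi(y)$ by periodicity. This gives
$$\pi(y+jp)-\pi(y)\equiv p\,\bigl(\pi(y+j)-\pi(y)\bigr)\bigl(\pi(y)-\pi(y-1)\bigr)\pmod{p^2}.$$
Now define $D(y)$ by $pD(y)=\pi(y+p)-\pi(y)$, viewed in $\Zmod p$. The case $j=1$ reads
$$D(y)\equiv\bigl(\pi(y+1)-\pi(y)\bigr)\bigl(\pi(y)-\pi(y-1)\bigr)\pmod p.$$
The right-hand side depends only on $y$ modulo $p$, again by periodicity, so $D(y+p)=D(y)$.

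Next, I would take $j=2$. The left side telescopes as $pD(y)+pD(y+p)=2pD(y)$. Comparing with the case $j=1$ gives
$$2\bigl(\pi(y+1)-\pi(y)\bigr)\bigl(\pi(y)-\pi(y-1)\bigr)\equiv\bigl(\pi(y+2)-\pi(y)\bigr)\bigl(\pi(y)-\pi(y-1)\bigr)\pmod p.$$
The factor $\pi(y)-\pi(y-1)$ is nonzero modulo $p$, because the truncation $T(\pi)$ is a permutation of $\Zmod p$ and $y\not\equiv y-1$. Cancelling it yields
$$\pi(y+2)-\pi(y+1)\equiv\pi(y+1)-\pi(y)\pmod p.$$
So the successive differences of $\pi$ modulo $p$ are constant. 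By induction from $\pi(0)$ and $\pi(1)$, this gives $\pi(a)\equiv\pi(0)+a\bigl(\pi(1)-\pi(0)\bigr)\pmod p$ for all $a$, which is the claim.

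The main obstacle is the bookkeeping in the first step. One must check carefully that each replacement of an exponent by its value modulo $p$ is legitimate, using $w^{x}=1+px$ and that only $x$ modulo $p$ matters. One must also check that $D$ is well defined and $p$-periodic, since this is what makes the telescoping in the $j=2$ case work. The small cases ($p=2$, where $j=2\equiv 0$ modulo $p$) should be checked separately. For $p=2$ the conclusion is trivial anyway, because a permutation of $\Zmod 2$ is determined by its value at $0$.
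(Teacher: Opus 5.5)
Your proof is correct, and it takes a different route from the paper's.

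The paper uses the single instance $\Iso(a,-aw^{-1},1)$. Since $a-aw^{-1}=pa$ in $\Zpp$, the left side collapses to $\pi(0)$ and the exponent becomes exactly $\pi(1)-\pi(0)$. This expresses $p(\pi(1)-\pi(0))\pi(a)$ in terms of $R=\pi(0)-w^{\pi(1)-\pi(0)}\pi(pa)$. The paper then divides by $p$ and inverts $\pi(1)-\pi(0)$ modulo $p$.

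You instead use $\Iso(y-1,1,j)$ for $j=1,2$ to control the jumps $\pi(y+p)-\pi(y)=pD(y)$. Comparing the two cases kills the second difference of $T(\pi)$.

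Your route gives more than the statement asks for:
\begin{enumerate}
\item $T(\pi)$ is the affine map $x\mapsto \pi(0)+(\pi(1)-\pi(0))x$, so $\im T$ consists of affine permutations of $\Zmod{p}$ directly.
\item As a by-product, $\pi(y+p)=\pi(y)+p(\pi(1)-\pi(0))^2$. This recovers Proposition~\ref{prop:Kernel2} when $T(\pi)$ is trivial, and it explains the $m^2$ in Proposition~\ref{prop:ComplementaryAutomorphisms}.
\end{enumerate}

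Your route is also complete at a point where the paper's justification is not. To know $R/p$ modulo $p$ one needs $\pi(pa)$ modulo $p^2$. The paper only invokes periodicity, which gives $\pi(pa)\equiv\pi(0)\pmod p$ and nothing more. Your computation supplies exactly the missing information: $\pi(pa)-\pi(0)=paD(0)$, with $D(0)\equiv(\pi(1)-\pi(0))^2$. Once that is known, the paper's choice of equation is the quicker device.

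Finally, $p=2$ needs no separate treatment. There $j=2$ is a legitimate element of $\Zmod{4}$, and the congruences you derive hold trivially.
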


\begin{proof}
Let $a\in \Zmod{p^2}$, and let $w$ stand for $1+p$. Equation $\Iso\left(a, -aw^{-1}, 1\right)$ says 
$$ \pi(0) = \pi(a) + w^{\pi(1)-\pi(0)}\left[\pi(pa)-\pi(a)\right]. $$
Rearranging,
$$ \pi(a) - w^{\pi(1)-\pi(0)}\pi(a) = \pi(0) - w^{\pi(1)-\pi(0)}\pi(pa)$$
so
$$ p\left(\pi(1)-\pi(0)\right)\pi(a) = \pi(0) - w^{\pi(1)-\pi(0)}\pi(pa).$$
Let $R$ denote the value of the righthand side.  By Proposition~\ref{prop:Periodicity}, the value of $\pi(pa)$ is determined (mod $p$) by the value of $\pi(0)$.  Also, since the lefthand side is a multiple of $p$, $R$ must be as well.  Cancelling a $p$ from both sides,
$$ \left(\pi(1)-\pi(0)\right)\pi(a) \equiv R/p \pmod{p} $$
Also by Proposition~\ref{prop:Periodicity}, $\pi(0)$ and $\pi(1)$ cannot be equal (mod $p$).  So $\pi(1)-\pi(0)$ is invertible (mod $p$), and the previous equation implies that $\pi(a)$ is determined (mod $p$) by $R$, which is determined by the values of $\pi(0)$ and $\pi(1)$ (mod $p$).
\end{proof}

\begin{corollary}\label{cor:Image1}
Let $G=G\left(p^2,p,1+p\right)$ and let $\pi$ be a permutation of $\Zmod{p^2}$ which induces an automorphism of $G$.  Then $T(\pi)$, the truncation of $\pi$ is determined by the values of $\pi(0)$ and $\pi(1)$ (mod $p$).  In particular, $|\im T| \le p(p-1)$.
\end{corollary}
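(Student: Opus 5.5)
The corollary follows almost immediately from Proposition~\ref{prop:TwoValues}, together with the periodicity established in Proposition~\ref{prop:Periodicity}. The plan has two parts: first identify $T(\pi)$ with the data that Proposition~\ref{prop:TwoValues} controls, and then count the possible values of that data.

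For the first assertion, recall that $T(\pi)$ is the permutation of $\Zmod{p}$ sending $x \bmod p$ to $\pi(x) \bmod p$. This is well defined by Proposition~\ref{prop:Periodicity} with $c=p$. Thus $T(\pi)$ is determined by the residues $\pi(a) \bmod p$ as $a$ ranges over $\Zpp$; in fact $a\in\{0,\ldots,p-1\}$ suffices. By Proposition~\ref{prop:TwoValues}, each of these residues is determined by the pair $\bigl(\pi(0) \bmod p,\ \pi(1) \bmod p\bigr)$. Hence if two automorphism-inducing permutations $\pi,\pi'$ agree on $\pi(0)$ and $\pi(1)$ modulo $p$, then $T(\pi)=T(\pi')$.

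For the bound, I would consider the map $\im T \to \Zmod{p}\times\Zmod{p}$ sending $T(\pi)$ to $\bigl(T(\pi)(0),T(\pi)(1)\bigr)$. By the first part this map is injective, since a permutation in the image is recovered from these two values. Its image avoids the diagonal, because $T(\pi)$ is a permutation of $\Zmod{p}$ and so $T(\pi)(0)\neq T(\pi)(1)$. (For $p=2$ one has $0\neq 1$ in $\Zmod{2}$; alternatively the proof of Proposition~\ref{prop:TwoValues} already notes $\pi(0)\not\equiv\pi(1)\pmod p$.) The ordered pairs of distinct residues number $p(p-1)$, so $|\im T|\le p(p-1)$.

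The only point requiring care is that Proposition~\ref{prop:TwoValues} gives determination by the values for the \emph{same} $\pi$. The argument therefore has to be phrased as: the pair of residues functionally determines $T(\pi)$ across all automorphisms $\pi$. This reading is what the proof of Proposition~\ref{prop:TwoValues} actually shows, since the quantity $R/p$ there depends only on $\pi(0)$ and $\pi(1) \bmod p$. With that reading, nothing further is an obstacle.
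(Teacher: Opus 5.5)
Your reduction is the paper's own: the corollary gets no separate proof there and is read off from Proposition~\ref{prop:TwoValues}. Your counting step, injecting $\im T$ into ordered pairs of distinct residues, is fine. The problem is the sentence you single out as carrying the weight: $R/p$ is \emph{not} determined by $\pi(0)$ and $\pi(1)$ modulo $p$. Periodicity only gives $\pi(pa)=\pi(0)+pt$ for some digit $t$. With $e=\pi(1)-\pi(0)$ one then gets $R=\pi(0)-(1+pe)\pi(pa)\equiv -p\bigl(t+e\,\pi(0)\bigr)\pmod{p^2}$, so $R/p \bmod p$ involves $t$, which nothing has controlled. The paper's proof of Proposition~\ref{prop:TwoValues} has the same hole, so appealing to it does not give the functional dependence you need. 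Indeed, that proof uses only $\Iso(a,-aw^{-1},1)$ for all $a$, together with periodicity. Both hold for every permutation $\pi(q,r)=\bigl(Q(q,r),\sigma(r)\bigr)$ in quotient/remainder notation, where $\sigma$ is any permutation of $\Zmod{p}$, each $Q(\cdot,r)$ is a bijection, and $Q(s,0)=Q(0,0)+e\bigl(\sigma(s)-\sigma(0)\bigr)$. For $p\ge 5$ such a $\sigma$ is not pinned down by $\sigma(0),\sigma(1)$.

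What is missing is an argument that $T(\pi)$ is affine. It should use an instance of the isomorphism equation in which the unknown digits cancel. Write $\sigma=T(\pi)$ and $\bar x=x\bmod p$. Since $w^j=1+pj$, the equation $\Iso(x-1,1,j)$ (the same instance used in Proposition~\ref{prop:Kernel2}) reads
\[ \pi(x+pj)-\pi(x)=p\,\bigl(\sigma(\bar x+j)-\sigma(\bar x)\bigr)\bigl(\sigma(\bar x)-\sigma(\bar x-1)\bigr)\quad\text{in } \Zpp, \]
and the right side depends only on $\bar x$ and $j$. Put $\psi(j)=\sigma(j)-\sigma(0)$ and $c=\sigma(0)-\sigma(-1)\neq 0$. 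Apply the identity at $x=0$ (with $j$, and with $j+j'$) and at $x=pj$ (with $j'$). This gives $c\,\psi(j+j')=c\,\psi(j)+c\,\psi(j')$ in $\Zmod{p}$, so $\psi$ is additive and $\sigma(j)=\sigma(0)+j\bigl(\sigma(1)-\sigma(0)\bigr)$. Thus $T(\pi)$ is an affine permutation of $\Zmod{p}$ determined by $\pi(0),\pi(1)\bmod p$, and your count then yields $|\im T|\le p(p-1)$.
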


Next, we will show that there are in fact $p(p-1)$ distinct elements in the image of $T$.  To do this, we will construct certain permutations of $\Zmod{p^2}$ and show that they induce automorphisms of $G$.  In defining these permutations, it is helpful to identify the set $\Zmod{p^2}$ with the cartesian product $\Zmod{p}\times \Zmod{p}$ by associating each $a\in \Zmod{p^2}$ with the ordered pair containing its quotient and remainder (mod $p$).  

{\em Example (using $p=5$).} We identify the element $22$ in $\Zmod{25}$ with the ordered pair $(4,2)$ in $\Zmod{5}\times\Zmod{5}$, as $22=4\cdot 5+2$.

To construct our permutations of $\Zmod{p^2}$, we will begin with an affine function $mx+b$ from $\Zmod{p}$ to itself, where $m$ is a unit in $\Zmod{p}$ and $b$ is any element of $\Zmod{p}$.  Define $\pi:\Zmod{p^2}\to\Zmod{p^2}$ by identifying each element of $\Zmod{p^2}$ with its quotient/remainder pair in $\Zmod{p}\times \Zmod{p}$, and setting $\pi(q,r) = \left(m^2q, mr+b\right)$, using $\Zmod{p}$ arithmetic in each component.

{\em Example.}  Beginning with the function $3x+2$ on $\Zmod{5}$, we define the function $\pi$ by $\pi(q,r) = \left(3^2q,3r+2\right)$.  To apply this to an element of $\Zmod{25}$, we first separate it into its quotient and remainder parts.
\par\qquad For example, $\pi(22) = \pi(4,2) = \left(3^2\cdot 4, 3\cdot2+2\right) = \left(1,3\right) = 8$.  
Here is a complete table of values for this particular function $\pi$.

$$ \begin{array}{|c|c|c|c|c|c|c|c|c|c|c|c|c|c|}
\hline
 a & 0 & 1 & 2 & 3 & 4 & 5 & 6 & 7 & 8 & 9 & 10 & 11 & 12 \\
\hline
 \pi(a) & 2 & 0 & 3 & 1 & 4 & 22 & 20 & 23 & 21 & 24 & 17 & 15 & 18 \\
\hline
\end{array} $$

$$ \begin{array}{|c|c|c|c|c|c|c|c|c|c|c|c|c|}
\hline
 a & 13 & 14 & 15 & 16 & 17 & 18 & 19 & 20 & 21 & 22 & 23 & 24 \\
\hline
 \pi(a) & 16 & 19 & 12 & 10 & 13 & 11 & 14 & 7 & 5 & 8 & 6 & 9 \\
\hline
\end{array} $$

\begin{proposition}\label{prop:ComplementaryAutomorphisms}
Let $G=G\left(p^2,p,1+p\right)$.  Let $m$ be a unit in $\Zmod{p}$ and let $b$ be any element of $\Zmod{p}$.  Identifying $\Zmod{p^2}$ (as a set) with $\Zmod{p}\times \Zmod{p}$, as described above, define $\pi:\Zmod{p^2}\to \Zmod{p^2}$ by $\pi\left(q,r\right) = \left(m^2q, mr+b\right)$.  Then $\pi$ induces an automorphism of $G$.
\end{proposition}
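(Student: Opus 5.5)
The plan is to verify the isomorphism equation $\Iso(a,i,j)$ directly for all $a,i,j\in\Zpp$, as in the proof of Proposition~\ref{prop:Kernel1}. To do this I will first extract from the definition two structural properties of $\pi$ that avoid any explicit handling of quotients and remainders. First, $\pi$ is a bijection: $m$ and $m^2$ are units mod $p$, so each component map $q\mapsto m^2q$ and $r\mapsto mr+b$ is a permutation of $\Zmod{p}$. Writing $x=qp+r$ with $0\le r<p$, we have $\pi(x)=m^2qp+s(r)$, where $s(r)\in\{0,\dots,p-1\}$ represents $mr+b$. From this formula I will read off:
\begin{enumerate}
\item $\pi(x)\equiv mx+b\pmod{p}$ for all $x$, so $T(\pi)$ is the affine map $x\mapsto mx+b$;
\item $\pi(x+tp)=\pi(x)+m^2tp$ in $\Zpp$ for all $x$ and all integers $t$.
\end{enumerate}
Property (2) holds because adding $tp$ changes only the quotient coordinate, and $m^2qp$ is well defined mod $p^2$ once $q$ is taken mod $p$. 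Property (2) is the step I expect to need the most care: it is what controls the carries between the two coordinates.

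Next I will simplify the exponential factor, using $w=1+p$ and $w^x=1+px$. Since $w^x$ depends only on $x\bmod p$, property (1) gives
$$w^{\pi(a+w^ji+j)-\pi(a+w^ji)}=w^{mj}=1+mjp.$$
For the lefthand side I use $w^j i=i+ijp$ together with property (2):
$$\pi(a+w^ji)=\pi(a+i+ijp)=\pi(a+i)+m^2ijp.$$

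The righthand side of $\Iso(a,i,j)$ then becomes
$$\pi(a)+(1+mjp)\left[\pi(a+i)-\pi(a)\right]=\pi(a+i)+mjp\left[\pi(a+i)-\pi(a)\right].$$
The bracketed term is multiplied by $p$, so only its value mod $p$ matters. By property (1) that value is $mi$, so the righthand side equals $\pi(a+i)+m^2ijp$, which agrees with the lefthand side. Hence every instance of the isomorphism equation holds, and $\pi$ induces an automorphism of $G$. Because $T(\pi)$ is $x\mapsto mx+b$, these automorphisms realize all $p(p-1)$ affine maps of $\Zmod{p}$, which is the bound in Corollary~\ref{cor:Image1}.
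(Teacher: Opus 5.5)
Your proof is correct, but it verifies $\Iso(a,i,j)$ by a different and shorter route than the paper.

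The paper works coordinatewise. It writes every quantity in $\Zpp$ as a quotient--remainder pair and introduces the carry and borrow functions $\car$ and $\bor$. It then tracks the quotient component of each side through (\ref{eqn:lhs})--(\ref{eqn:rhs}) and cancels the leftover term $\bor(ma+b+mi,ma+b)+\car(ma+b,mi)$ using the identity (\ref{eqn:borcar}). The remainder components are handled by a one-line remark.

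You never decompose anything. You extract the two facts $\pi(x)\equiv mx+b\pmod p$ and $\pi(x+tp)=\pi(x)+m^2tp$, and after that the computation is the same as in the proof of Proposition~\ref{prop:Kernel1}. Your argument also shows why no carries need tracking. The only term where they could interfere is $\pi(a+i)-\pi(a)$, and it enters the righthand side only through the product $mjp\,[\pi(a+i)-\pi(a)]$. So only its residue $mi$ mod $p$ matters.

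What your approach buys is a single criterion covering both propositions. Any permutation of $\Zpp$ satisfying your properties (1) and (2), for some unit $m$ and some $b$, induces an automorphism. Proposition~\ref{prop:Kernel1} is the case $m=1$, $b=0$. Combined with Proposition~\ref{prop:Kernel2} and the decomposition in Theorem~\ref{thm:AutoGroup3}, this shows these are exactly the permutations inducing automorphisms of $G$. In exchange, the paper's computation makes the pair structure of $\pi$ and the role of carries explicit, at the cost of considerably more bookkeeping.
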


\begin{proof}
For $a\in \Zmod{p^2}$, let $\quo(a)$ denote the quotient of $a$ by $p$, and let $\rem(a)$ denote the remainder, so that $a$ is identified with the pair $\left(\quo(a),\rem(a)\right)$.

Using $w$ to denote the unit $1+p$, we need to show that equation $\Iso(a,i,j)$, namely
$$\pi\left(a+w^ji\right)=\pi(a)+w^{\pi\left(a+w^ji+j\right)-\pi\left(a+w^ji\right)}\left[\pi\left(a+i\right)-\pi(a)\right],$$
is satisfied for all $a$, $i$, and $j$ in $\Zmod{p^2}$.   We will study the quotient and remainder parts of the two sides separately.

In fact, since $w\equiv 1\pmod{p}$ and $\pi$ acts as an affine function on the remainder parts, it is easy to see that both sides leave the same remainder (mod $p$).  

To understand the quotient parts, let us introduce two functions from $\Zmod{p}\times \Zmod{p}$ to $\Zmod{p}$ which allow us to express $\Zmod{p^2}$ addition and subtraction in terms of ordered pairs.
Namely, define
$$\car(a,i) := \quo(a+i) - \left[\quo(a)+\quo(i)\right]$$
and
$$\bor(a,i) := \quo(a-i) - \left[\quo(a)-\quo(i)\right].$$
These are defined so that
$$ \quo(a+i) = \quo(a) + \quo(i) + \car(a,i) $$
and
$$ \quo(a-i) = \quo(a) - \quo(i) + \bor(a,i). $$
But note that, for any $x$ and $y$ in $\Zmod{p}$,
\begin{align*}
\bor(x+y, x) &= \quo(y) - \left[\quo(x+y) - \quo(x)\right] \\
& = \quo(x) + \quo(y) - \quo(x+y) \\
& = -\car(x,y),
\end{align*}
so that we always have
\begin{equation}\label{eqn:borcar}
\bor(x+y, x) + \car(x,y) = 0. 
\end{equation}

Looking first at the lefthand side of equation $\Iso(a,i,j)$,
\begin{equation}\label{eqn:lhs}
\quo\left(\pi\left(a+w^ji\right)\right) = m^2\left(\quo(a)+\quo(i)+ij+\car(a,i)\right).
\end{equation}

We will build up the righthand side piece by piece. The intermediate results are expressed as ordered quotient-and-remainder pairs. First,
\begin{equation}\label{eqn:mi2}
w^{\pi\left(a+w^ji+j\right)-\pi\left(a+w^ji\right)} = \left(mj, 1\right)
\end{equation}
and
\begin{equation}\label{eqn:mi3}
\pi(a+i)-\pi(a) = 
\left(
    m^2\left(\quo(i)+\car(a,i)\right) + \bor\left(m\left(a+i\right)+b,ma+b\right),
    \rem(mi)
\right).
\end{equation}

Multiplying (\ref{eqn:mi2}) and (\ref{eqn:mi3}) together gives

\begin{equation*}
\left(
m^2\left(\quo(i)+\car(a,i)+ij\right)+\bor\left(m(a+i)+b, ma+b\right),
\rem(mi)
\right).
\end{equation*}

So, in all, the quotient part of the righthand side of equation $\Iso(a,i,j)$ is
\begin{equation}\label{eqn:rhs}
m^2\left(\quo(a)+\quo(i)+\car(a,i)+ij\right) + \bor\left(m(a+i)+b,ma+b\right) + \car\left(ma+b, mi\right).
\end{equation}

Comparing the lefthand side result (\ref{eqn:lhs}) with the righthand side result (\ref{eqn:rhs}),
all of the terms with a factor of $m^2$ are common to both sides, and the remaining expression 
$\bor(ma+b+mi,ma+b) + \car(ma+b,mi)$ which appears in (\ref{eqn:rhs}) is always zero by the 
borrow-and-carry identity (\ref{eqn:borcar}).  Thus, equation $\Iso(a,i,j)$ is satisfied for 
all $a$, $i$, and $j$, and $\pi$ induces a graph automorphism. 
\end{proof}

Now we can complete the analysis of the automorphism group of $G(p^2,p,1+p)$.

\begin{proof}[Proof of Theorem~\ref{thm:AutoGroup3}].  Let $G=G(p^2,p,1+p)$ and let $A$ denote the set of automorphisms of $G$ of the form
$(q,r)\mapsto \left(m^2q,mq+r\right)$ as described in Proposition~\ref{prop:ComplementaryAutomorphisms}.  

It is straightforward that $A$ is a subgroup of $\Aut(G)$ and that $A\cong \text{Aff}\left(\FieldF_p\right)$.  In particular, $|A|=p(p-1)$.

Proposition~\ref{prop:KernelOfT} showed that $\ker T$ is the elementary abelian $p$-group of order $p^p$.

From Proposition~\ref{prop:TwoValues}, it follows that $A\cap \left(\ker T\right)$ contains only the identity automorphism.

Since we know from Corollary~\ref{cor:Image1} that $|G/\ker T|\le p(p-1)$, it follows that $A$ is a complement to $\ker T$ in $\Aut(G)$,
and $\Aut(G)\cong \left(\ker T\right) \rtimes A$.  In other words, $\Aut(G)$ has a normal elementary abelian $p$-group of order $p^p$
and a complementary $\text{Aff}\left(\FieldF_p\right)$.  
\end{proof}

\section{Open Questions}
If a reader is interested in investigating this family of digraphs further, I suggest a few problems which seem approachable at this point:
\begin{enumerate}
\item Establish the converse of Theorem~\ref{thm:DistinctGraphs} -- that is, briefly, that units with the same cycle type give rise to isomorphic graphs.  An easy case to start is when the size $k$ is a prime power, but I haven't been able to complete the proof for arbitrary $k$.

\item Compute the rank of the adjacency matrix in terms of the parameters $(k,c,w)$.  The possible ranks for a central digraph are analyzed in \cite{shader1974221} and subsequently \cite{curtis2004central}.  It would be interesting to establish a concrete relationship between the size of the automorphism group and the matrix rank.

\item It was conjectured \cite{curtis2004central} that every central digraph of a given size can be obtained from the natural one by switching operations.  This has been shown false in general \cite{kundgen2011switchings}, but one might ask whether all \gnames\ can be obtained in this way.

\item Much of the analysis of the automorphism groups of \gnames\ here is based on the observation that every vertex is a link between two idempotent vertices.  How common or exceptional is this property among central digraphs in general?

\end{enumerate}


\bibliographystyle{amsplain}
\bibliography{plain-central-digraphs}

\end{document}